\DeclareMathAlphabet{\mathpzc}{OT1}{pzc}{m}{it}
\def\cal{\mathcal}
\def\d{{\mathrm d}}
\def\F2{\mathbb F_2}
\def\A2{{\mathcal A}_2}
\def\SC{{\cal{SC}}}
\def\hsc{{\mathrm{\bf{sc}}}}
\def\Lie{{\cal{L}ie}}
\def\Com{{\cal{C}om}}
\def\Ass{{\cal{A}ss}}
\def\Ger{{\cal{G}er}}
\def\SCvor{{\cal{SC}}^{\rm vor}}
\def\hscvor{{\mathrm{\bf{sc}}^{\rm vor}}}
\def\dgvs{{\bf dgvs}}
\def\Der{{\rm Der}}
\def\Hom{{\rm Hom}}
\def\hom{{\rm hom}}
\def\End{{\rm End}}
\def\sgn{{\rm sgn}}
\def\ide{{\rm id}}
\def\cl{\mathpzc c}
\def\op{\mathpzc o}
\def\kfield{k}
\newcommand{\ac}{\scriptstyle \text{\rm !`}}
\newcommand{\epi}{\twoheadrightarrow}
\newcommand{\wiggly}{\xymatrix@1@C=15pt{  && \ar@{~}[ll]}}
\newcommand{\straight}{\xymatrix@1@C=30pt{  & \ar@{-}[l]}}
\newtheorem{thm}{Theorem}[subsection]
\newtheorem{lem}[thm]{Lemma}
\newtheorem{prop}[thm]{Proposition}
\newtheorem{cor}[thm]{Corollary}
\theoremstyle{definition}
\newtheorem{defn}[thm]{Definition}
\theoremstyle{remark}
\theoremstyle{remark}
\begin{document}

%\title[On the spectral sequence associated to the compactification of points in the upper half plane]{On the spectral sequence associated to the compactification of points in the upper half plane}
\title[On the spectral sequence of the Swiss-cheese operad]{On the spectral sequence of the Swiss-cheese operad}
\author{Eduardo Hoefel}
\address{Universidade Federal do Paran\'a, Departamento de Matem\'atica C.P. 019081, 81531-990 Curitiba, PR - Brazil }
\email{hoefel@ufpr.br}
\author{Muriel Livernet}
\address{Universit\'e Paris 13, Sorbonne Paris Cit\'e,  LAGA, CNRS (UMR 7539), 99 avenue
  Jean-Baptiste Cl\'ement, F-93430 
  Villetaneuse, France}
\email{livernet@math.univ-paris13.fr}
\thanks{This collaboration is funded by the MathAmSud program ``OPECSHA'' coordinated by M. Ronco, by Funda\c c\~ao Arauc\'aria (Brazil) grant FA 490/6032 and by the r\'eseau Franco-br\'esilien de math\'ematiques.}
\keywords{Koszul Operads, Homotopy Algebras, Deformation Theory}
\subjclass[2000]{18G55, 18D50}
\date{\today}
\begin{abstract}
We prove that the homology of the Swiss-cheese operad is a Koszul operad. As a consequence, we obtain that the spectral sequence associated to the stratification of the compactification of points on the upper half plane collapses at the second stage, proving a conjecture by A. Voronov in \cite{Voronov99}. However, we prove that the operad obtained at the second stage differs from the homology of the Swiss-cheese operad.
\end{abstract}
\maketitle

%
%
%--------------------------------------------INTRODUCTION----------------------------------------
%
%

%\setcounter{tocdepth}{2}
%\tableofcontents

\section{Introduction}

An operad $\cal O$ is called {\it differentiable} when it is defined on the symmetric monoidal category of differentiable manifolds.  
If each $\cal O(n)$ is a manifold with corners whose connected boundary components are cartesian products of $\cal O(k)$ (with $k < n$) and the operad structure is given by the inclusion map of the boundary strata, then the operad is called {\it stratified}. To every stratified operad is associated a dg-operad 
given by the spectral sequences induced by a natural filtration on its singular chain complex: the boundary strata filtration. Since that filtration is given 
by the codimension of the boundary components, it is finite and hence converges to the homology $H_*(\cal O)$ at some finite stage. One would naturally 
wonder wether the operad structure induced on homology by the spectral sequence coincides with the one induced by the topological operad structure.

In the present paper we study the spectral sequence of a stratified operad, given by Kontsevich's compactification \cite{kont:defquant-pub}, which is homotopically equivalent to the Swiss-cheese operad $\SC$. We show that $H_*(\SC)$ is an inhomogeneous Koszul operad. We also show that the operad structure induced on $H_*(\SC)$ by the spectral sequence of its stratified version differs from the structure given by the topological operad structure of $\SC$. The difference between the two operad structures in $H_*(\SC)$ is explored in terms of the Koszul duality theory for inhomogeneous quadratic operads developed by Galvez-Carillon, Tonks and Vallette in \cite{GTV12}.

Among the related algebraic structures are Kajiura and Stasheff's OCHA \cite{KS06a,KS06c}, Leibniz pairs \cite{FGV95} and extensions of those considered by Dolgushev \cite{Dolgushev11}. 
%This paper is the natural sequel to the paper "Open-closed Homotopy Algebras and Strong Homotopy Leibniz Pairs through Koszul Operad Theory",
The relation between OCHAS, Leibniz pairs and the Swiss-cheese operad has been carefully studied by the authors in \cite{HoeLiv12},
where the zero-th homology of the Swiss-cheese operad $\SC$ were related to the first row of the spectral sequence associated to
the Kontsevich compactification. One of the purpose of \cite{HoeLiv12} was to prove an $\SC$ analogue of the following fact concerning the little disks operad $\cal D_2$:

\medskip
\noindent{\bf Proposition.} \it
 The zero-th homology of the operad $\cal D_2$ is Koszul dual to a suspension of the top homology.\rm

\medskip
In the case of little discs, the zero-th homology is the operad $\Com$ and the top homology is a desuspension of the operad $\Lie$. In fact, the above Proposition 
is a consequence of a theorem, proved by Getzler and Jones, according to which the Gerstenhaber operad $H_*(\cal D_2)$ is, up to suspension, a self-dual Koszul operad.

We proved in \cite{HoeLiv12} that the zero-th homology of $\SC$ is a Koszul quadratic-linear operad, and that its Koszul dual $H_0(\SC)^!$, which is a dg-operad, has for homology a suspension of the top homology. Note that in the context of $\SC$, the top homology does not form an operad, so by top homology we mean the smallest operad containing the top degrees generators.

The little disks operad $\cal D_2$ is not stratified. However, by considering the real Fulton-MacPherson compactification of the moduli space
of points in the complex plane, we get a homotopically equivalent stratified operad sometimes denoted $\cal F_2$ (see \cite{Salvatore01}). The same compactification procedure can be applied to the Swiss-cheese operad $\SC$ (the homotopy equivalent stratified operad obtained is sometimes denoted $\cal H_2$).
So, by passing to a homotopy equivalent operad, we can assume that both little disks and Swiss-cheese operads are stratified. 
We note that from the cofibrancy of $\cal F_2$ proven by Salvatore \cite{Salvatore01}, the homotopy equivalence between 
the above operads are compatible with the operad structure, hence the topological operad structure of $\cal D_2$ and $\cal F_2$ (resp. $\SC$ and $\cal H_2$)
induce the same operad structure on homology. Hence, to avoid cumbersome notation we will work with the stratified versions of little disks and Swiss-cheese,  while keeping the notation: $\cal D_2$ and $\SC$.

The main result of this paper says that the homology of the Swiss-cheese operad is a quadratic-linear Koszul operad in the sense of \cite{GTV12}. The same result is true for the 
homology of $\SCvor$, a variant of $\SC$. As an application, in Theorem \ref{T:ss} we show that the spectral sequence $E(SC)$ of the Swiss-cheese operad collapses at the second stage, proving a conjecture by A. Voronov in \cite{Voronov99}. The relation (modulo (de)suspension) between $E^1(SC)$ and the cobar construction of the cohomology cooperad $H^*(\SC)$ is well known \cite{Dolgushev11}, but in our setting it is slightly different, so a proof is given in Lemma \ref{lemma:cobar}. Finally, using that Lemma and the Koszul duality theory in the quadratic-linear framework, we prove that the spectral sequence given by the stratified structure of $\SC$ induces on $H_*(\SC)$ an operad structure that is different from the one induced by the topological operad structure of $\SC$, while the $\mathbb S$-module structure is the same. 

Getzler and Jones \cite{GetJon94} have proven that 
%\begin{equation}\label{eq:getjon}
$
 E^2(\cal D_2) = \Lambda^{-1}(H_*(\cal D_2)^!)$, where $\Lambda^{-1}(H_*(\cal D_2)^!)$ is  isomorphic to $\mathrm{{\bf e}}_2 = H_*(\cal D_2)$.
%\end{equation} 
In analogy to their result, and recalling that $H_*(\SC)^!$ is a dg-operad, in the present paper we prove that
\begin{equation}\label{eq:hoeliv}
 E^2(\SC) = H(\Lambda_{\cl}^{-1}(H_*(\SC)^!)) = H(\Lambda_{\cl}^{-1}(\hsc^!)),
\end{equation} 
where $H(\Lambda_{\cl}^{-1}(\hsc^!))$ is isomorphic to $\hsc = H_*(\SC)$ as an $\mathbb S$-module, but not as an operad.

%Although the $\mathbb S$-modules $E^2(\SC)$ and $H_*(\SC)$ coincide, we prove
%that they do not coincide as  operads. We compute the operad structure of
%$E^2(\SC)$ in  Proposition \ref{P:final} and prove that it coincides with a quadratic version of $H_*(\SC)$, which is not the quadratic operad associated to a quadratic-linear
%operad as defined by Galvez-Carillon, Tonks and Vallette in \cite{GTV12}.
%
%
%\medskip
%\noindent{\bf Proposition.} \it
%The geometric spectral sequence of $\cal D_2$ collapses at the second stage. Furtheremore, the operad $E^2(\cal D_2)$ coincides with the operad $\Ger=H_*(\cal D_2)$.\rm
%\medskip

%We prove in Theorem \ref{T:ss} that the geometric spectral sequence $E(SC)$ of the Swiss-cheese operad collapses at the second stage. 
%Although the $\mathbb S$-modules $E^2(\SC)$ and $H_*(\SC)$ coincide, we prove
%that they do not coincide as  operads. We compute the operad structure of
%$E^2(\SC)$ in  Proposition \ref{P:final} and prove that it coincides with a quadratic version of $H_*(\SC)$, which is not the quadratic operad associated to a quadratic-linear
%operad as defined by Galvez-Carillon, Tonks and Vallette in \cite{GTV12}.
%
%
%This example should serve as a counter-example saying that the geometric spectral sequence of a topological operad $\cal P$ converges, as an operad, to the homology operad $H_*(\cal P)$.

\medskip
The plan of the paper is the following. The second section is devoted to preliminaries and notations. The third section is devoted to the homology of the operads $\SCvor$ and $\SC$. 
As in \cite{HoeLiv12}, in order to understand the structure of the operad 
$\hsc=H_*(\SC)$, it is necessary to first understand  the operad $\SCvor$, another version of the Swiss-cheese operad, whose homology is quadratic. The end of  section 3 is devoted to the structure of $H_*(\hsc^!)$. We use the technics of distributive laws, as well as the results obtained in \cite{HoeLiv12}.
The fourth section concentrates on the spectral sequence of $\SC$.

\section{Preliminaries}

\subsection{On differential graded vector spaces.} We work on a ground field $\kfield$ of characteristic $0$.
 The category  $\dgvs$ is the category of lower graded $\kfield$-vector spaces together with a differential of degree $-1$.  Objects in $\dgvs$ are called for short  dgvs.  The degree of $x\in V$, where $V$ is a dgvs is  denoted by $|x|$. We say that a dgvs $V$ is finite dimensional if for each $n$, the vector space $V_n$ is finite dimensional.
 
 %We will consider the category of vector spaces and the one of graded vector spaces as full subcategories of $\dgvs$. 
 The vector space $\hom_\kfield(V,W)$ denotes the $\kfield$-linear morphisms between two vector spaces  $V$ and $W$. When $V$ and $W$ are objects in $\dgvs$,
 the differential graded vector space of maps from $V$ to $W$ is
 $\bigoplus_{i \in \mathbb Z} \Hom_i(V,W)$, where $\Hom_i(V,W) = \prod_n \hom_\kfield(V_n, W_{n+i})$ together with the differential
$(\partial f)(v)=\d_W(f(v))-(-1)^{|f|}f(d_Vv)$.

 The graded linear dual of $V$ in $\dgvs$ is  $V^*=\Hom(V,\kfield)$, where $\kfield$ is concentrated in degree $0$ with $0$-differential. Consequently $(V^*)_n=(V_{-n})^*$ and $(\partial f)(x)=-(-1)^n f(d_Vx)$ for any $f\in (V^*)_n$ and 
  $x\in V_{-n+1}$.

  The suspension of a dgvs $V$ is denoted by $sV$ and defined as $(sV)_n=V_{n-1}$.

\subsection{On operads, 2-colored operads, cooperads}

\subsubsection{On the symmetric group}
The symmetric group acting on $n$ elements is denoted by $S_n$.  An element $\sigma\in S_n$ will be denoted by its image notation $(\sigma(1)\ldots\sigma(n))$. The trivial representation of $S_n$ is denoted by $\kfield$, the signature representation by $\sgn_n$ and the regular representation by $\kfield[S_n]$.

\subsubsection{Collections and $\mathbb S$-modules} 
In this article, we  work with 2-colored (co)-operads, either in the category of spaces, or in $\dgvs$. The  colors we consider are denoted by $\cl$ (for closed) and $\op$ (for open). 
A {\sl $2$-collection}  $\cal P$ is a  family of dgvs, 
$(\cal P(\underline c;d)=\cal P(c_1,\ldots,c_n,d))_{\{c_i,d\in\{\cl,\op\}\}}$.
Let $\underline c=(c_1,\ldots,c_n)$ be an $n$-tuple of colors. The symmetric group $S_n$ acts on the set of $n$-tuples of colors by
$\underline c\cdot \sigma=(c_{\sigma(1)},\ldots,c_{\sigma(n)} ).$

An {\sl $\mathbb S$-module} is 
a $2$-collection  $\cal P$ together with an action of the symmetric groups,
sending $(x\in \cal P(c_1,\ldots,c_n;d),\sigma\in S_n)$ to $x\cdot\sigma \in \cal P(\underline c\cdot \sigma;d)$.

Note that it is an extension of the usual definition of an $\mathbb S$-module, which is a family of dgvs
 $(\cal Q(n))_{n\geq 1}$ such that for each $n$,  $\cal Q(n)$ is a right $S_n$-module. We can consider this collection as an $\mathbb S$-module, where
 $\cal Q(\underline c;d)$ is $\cal Q(n)$ if $\forall i, c_i=\cl$ and $d=\cl$ and is $0$ otherwise.

Given an $\mathbb S$-module $M$, we may want to consider some truncation $N$ of it, invariant under the action of the symmetric groups. 
By definition a sub-$\mathbb S$-module of $N$ is a sub-dgvs invariant under the induced action  of the symmetric groups.

\subsubsection{Operads}
 The $2$-collection $I$ defined by $I(\cl;\cl)=\kfield, I(\op;\op)=\kfield$ and $I(\underline c;d)=0$ elsewhere, plays a special role. Indeed,
a {\sl 2-colored operad} is an $\mathbb S$-module together with a unit map $\eta:I\rightarrow \cal P$ and
composition maps 
$$\gamma: \cal P(c_1,\ldots,c_n;d)\otimes\cal P(\underline b^1;c_1)\otimes\ldots\otimes\cal P(\underline b^n;c_n)\rightarrow
\cal P(\underline b^1,\ldots,\underline b^n;d),$$
which are associative, unital and respects the action of the symmetric groups.

We write $f(g_1,\ldots,g_n)$ for the image of $f\otimes g_1\otimes\ldots\otimes g_n$ or
$f(\ide^{\otimes i}\otimes g\otimes \ide^{\otimes n-1-i})$ whenever every $g$ except one is the identity.

We often use the same notation for $f$ in $\cal P$ or for $f$ seen as an operation on variables. In that context, we use the  Koszul sign convention

$$(f\otimes g)(\underline a\otimes \underline b)=(-1)^{|a||g|} f(a)\otimes g(b).$$

Because of the action of the symmetric groups, one may only consider  the spaces
$$\cal P(n,m;d)=\cal P(\underbrace{\cl,\ldots\cl}_{n\ \hbox{\rm times}},\underbrace{\op,\ldots\op}_{m\ \hbox{\rm times}};d).$$

In this paper, we only consider $2$-colored operads such that  $\cal P(0,0;x)=0$  and 
$\cal P(1,0;\cl)=\kfield=\cal P(0,1;\op)$.  They are naturally {\sl augmented}, that is, there is a morphism of operads
$\cal P\rightarrow I$ and $\overline{\cal P}$  denotes the kernel of this map.

Any operad $\cal P$ can be considered as a $2$-colored operad with
$\cal P(\underline c;d)=\cal P(n)$ if $\forall i, c_i=\cl$ and $d=\cl$, $\cal P(\op;\op)=\kfield$ and  $\cal P(\underline c;d)=0$ otherwise.

In the sequel, we often use the generic terminology of operads for either operads or 2-colored operads, or operads seen as $2$-colored operads.

%\medskip
%
%%%%%%%%%%%%%%%%%%%%%%%%%%%%%%%%%%%%%%%%%%%%%%%%%%%%%%%Colored operads, 2-colored operads
%

\subsubsection{Suspension of\;  $\mathbb S$-modules and operads}
The suspension of the $\mathbb S$-module $\cal P$ is
$$\Lambda\cal P(n,m;x)=s^{1-n-m}\cal P(n,m;x)\otimes \sgn_{n+m}.$$
If $\cal P$ is an operad, then the structure of $\cal P$-algebra on the pair $(V_\cl,V_\op)$ is equivalent to the structure of $\Lambda\cal P$-algebra on the pair $(sV_\cl,sV_\op)$.

 The suspension of the $2$-collection $\cal P$ with respect to the color $\cl$ is
 $$\Lambda_\cl\cal P(n,m;x)=s^{\delta_{x,\cl}-n}\cal P(n,m;x)\otimes \sgn_{n},$$
 where $\delta$ denotes the Kronecker symbol.
If $\cal P$ is an operad, then the structure of $\cal P$-algebra on the pair $(V_\cl,V_\op)$ is equivalent to the structure of $\Lambda_\cl\cal P$-algebra on the pair $(sV_\cl,V_\op)$.

\subsubsection{Operads defined by generators and relations}

The free  operad  generated by an $\mathbb S$-module $E$ is denoted by $\cal F(E)$. It is weight graded by the number $n$ of vertices of the underlying trees and $\cal F^{(n)}(E)$ denotes the component of weight $n$.

A {\sl quadratic  operad} $\cal F(E,R)$  is an operad of the form  $\cal F(E)/(R)$, where $E$ is an $\mathbb S$-module, $R$ is a
 sub-$\mathbb S$-module of $\cal F^{(2)}(E)$  and $(R)$ is the ideal generated by $R$. There are analogous notions of  cooperads,  free  cooperads $\cal F^c(E)$ cogenerated by $E$, and of cooperads cogenerated by an $\mathbb S$-module $V$  with correlation $R$ denoted by $C(V,R)$.

Describing an operad is equivalent to describing algebras over it. In the text, we say that an operad $\cal P$ is generated by $E$ with relations $R$ written as$$(*)\quad r_1=r_2.$$ 
This notation means that any $\cal P$-algebra satisfies the relation $(*)$. At the level of operads, this is understood as  
$R$ contains the element $r_1-r_2$.

\subsubsection{Koszul dual}\label{S:Koszul} Any quadratic operad $\cal P = \cal F(E,R)$ admits a {\sl Koszul dual cooperad} given by $\cal P^{\ac}=C(sE,s^2R).$

The {\sl Koszul dual operad} $\cal P^!$ of a finite dimensional quadratic operad $\cal P$ is
\begin{equation}\label{E:Koszuldual}
\cal P^!:=(\Lambda \cal P^{\ac})^* \text{ or equivalently }  \cal P^{\ac}=(\Lambda \cal P^!)^*.
\end{equation}

When $\cal P$ is a binary quadratic  operad,
we can use the original definition of Ginzburg and Kapranov \cite{GinKap94} (see also \cite[chapter 7]{LodVal}) to compute its Koszul dual operad. Namely, if $\cal P=\cal F(E,R)$, then
$\cal P^!=\cal F(E^\vee,R^{\bot})$, where $E^\vee=E^*\otimes\sgn_2$ and $R^{\bot}$ denotes the orthogonal of $R$ under the induced pairing $\cal F^{(2)}(E)\otimes \cal F^{(2)}(E^\vee)\rightarrow \kfield$.

%
%%
%%%
%%%%
%%%%%%%------------%%%%%%%-----------%%%%%%%Two versions of the Swiss-cheese operad
%%%%
%%%
%%
%

\subsection{Two versions of the Swiss-cheese operad}
Here we recall the two definitions for the Swiss-cheese operad we have introduced in \cite{HoeLiv12}. 
We denote by $\cal D_2$ the little disks operad.

For $m,n \geqslant 0$ such that $m + n >0 $, let us define $\cal{SC}(n,m;\op)$ as the space of those 
configurations $d \in \cal (2n + m)$ such that its image in the disk $D^2$ is invariant under complex conjugation 
and exactly $m$ little disks are left fixed by conjugation.
A little disk that is fixed by conjugation must be centered at the real line, in this case
it is called {\it open}. Otherwise, it is called {\it closed}.  
The little disks in $\cal{SC}(n,m;\op)$ are labelled according to the following rules:
\begin{enumerate}[i)]
\item Open disks have labels in $\{1, \dots, m \}$ and closed disks 
have labels in $\{ 1, \dots, 2n \}$. 
\item Closed disks in the upper half plane have labels in $\{ 1, \dots, n \}$. 
If conjugation interchanges the images of two closed disks, their labels must be congruent modulo $n$.
\end{enumerate}

There is an action of $S_n \times S_m$ on $\cal{SC}(n,m;\op)$ extending the action of
$S_n \times \{ e \}$ on pairs of closed disks having modulo $n$ congruent labels and the action of
$\{ e \} \times S_m$ on open disks. Figure \ref{Swiss_disc} illustrates a point in the space 
$\cal{SC}(n,m;\op)$.

\begin{figure}
\begin{picture}(0,0)%
\includegraphics{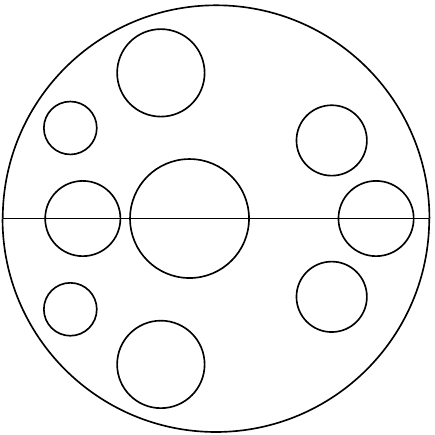}%
\end{picture}%
\setlength{\unitlength}{4144sp}%
\begingroup\makeatletter\ifx\SetFigFont\undefined%
\gdef\SetFigFont#1#2#3#4#5{%
  \reset@font\fontsize{#1}{#2pt}%
  \fontfamily{#3}\fontseries{#4}\fontshape{#5}%
  \selectfont}%
\fi\endgroup%
\begin{picture}(1976,1970)(5195,-5068)
\put(5500,-3700){\makebox(0,0)[lb]{\smash{{\SetFigFont{7}{8.4}{\rmdefault}{\mddefault}{\updefault}$1$}}}}
\put(5900,-3458){\makebox(0,0)[lb]{\smash{{\SetFigFont{7}{8.4}{\rmdefault}{\mddefault}{\updefault}$2$}}}}
\put(6660,-3750){\makebox(0,0)[lb]{\smash{{\SetFigFont{7}{8.4}{\rmdefault}{\mddefault}{\updefault}$n$}}}}
\put(5550,-4068){\makebox(0,0)[lb]{\smash{{\SetFigFont{7}{8.4}{\rmdefault}{\mddefault}{\updefault}$1$}}}}
\put(6000,-4068){\makebox(0,0)[lb]{\smash{{\SetFigFont{7}{8.4}{\rmdefault}{\mddefault}{\updefault}$2$}}}}
\put(6850,-4068){\makebox(0,0)[lb]{\smash{{\SetFigFont{7}{8.4}{\rmdefault}{\mddefault}{\updefault}$m$}}}}
\put(5415,-4531){\makebox(0,0)[lb]{\smash{{\SetFigFont{7}{8.4}{\rmdefault}{\mddefault}{\updefault}$n\!\!+\!\!1$}}}}
\put(5820,-4782){\makebox(0,0)[lb]{\smash{{\SetFigFont{7}{8.4}{\rmdefault}{\mddefault}{\updefault}$n\!\!+\!\!2$}}}}
\put(6650,-4474){\makebox(0,0)[lb]{\smash{{\SetFigFont{7}{8.4}{\rmdefault}{\mddefault}{\updefault}$2n$}}}}
\put(6335,-3554){\rotatebox{335.0}{\makebox(0,0)[lb]{\smash{{\SetFigFont{7}{8.4}{\rmdefault}{\mddefault}{\updefault}$\dots$}}}}}
\put(6486,-4058){\makebox(0,0)[lb]{\smash{{\SetFigFont{7}{8.4}{\rmdefault}{\mddefault}{\updefault}$\dots$}}}}
\put(6314,-4676){\rotatebox{25.0}{\makebox(0,0)[lb]{\smash{{\SetFigFont{7}{8.4}{\rmdefault}{\mddefault}{\updefault}$\dots$}}}}}
\end{picture}%
\caption{A configuration in $\cal{SC}(n,m;\op)$}
\label{Swiss_disc}
\end{figure}

The 2-collection $\cal{SC}$ is defined as follows. For $m,n \geqslant 0$ with $m+n > 0$, $\cal{SC}(n,m;\op)$
is the configuration space defined above and $\cal{SC}(0,0;\op) = \emptyset$. For $n \geqslant 0$,  $\cal{SC}(n,0;\cl)$
is defined as $\cal D_2(n)$ and $\cal{SC}(n,m;\cl) = \emptyset$ for $m \geqslant 1$. 
The 2-colored operad structure in $\cal{SC}$ is given, as usual, by insertion of disks.

There is a suboperad $\SCvor$ of $\SC$ defined by $\SCvor(n,m;x) = \SC(n,m;x)$, if $x = \cl$ or $m \geqslant 1$ and by
$\SCvor(n,m;x) = \emptyset$, otherwise.  
The above definition says that $\SCvor$ coincides with $\SC$ except for $m=0$ and
$x=\op$, where $\SCvor(n,0,\op)=\emptyset$ for any $n \geqslant 0$. 
The operad $\SCvor$ is equivalent to the one defined by Voronov in \cite{Voronov99}, while 
$\SC$ coincides with the one defined by Kontsevich in \cite{Kontse99}.

\medskip

\noindent{\bf Notation.} The homology of $\SC$ is denoted by $\hsc$ while that of $\SCvor$ is denoted
by $\hscvor$.

\subsection{Conventions and notations.}

\subsubsection{Generators}
In the paper we will have specific generators in the different operads considered, mainly two families of elements.

The first family is
$\{f_2,g_2,e_{0,2},e_{1,1},e_{1,0}\}$
and the second family is
$\{\mathfrak l_2,\mathfrak c_2,\mathfrak n_{0,2},\mathfrak n_{1,1},\mathfrak n_{1,0}\}.$

The following array sum up the properties of the elements.
The array must be read as follows: $f_2\in M(\cl,\cl;\cl)$ means that it is an operation on two closed variables giving a closed variable;
the representation is $\kfield$, that is, $f_2$ is a symmetric operation. The degree is $0$.

$$\begin{array}{|c|c|c|c|c|c|}
\hline
\text{element}&f_2&g_2&e_{0,2}&e_{1,1}&e_{1,0}\\
\text{color}& M(\cl,\cl;\cl)& M(\cl,\cl;\cl)&M(\op,\op;\op)&M(\cl,\op;\op)&M(\cl;\op)\\
\text{representation}&\kfield &\kfield & \kfield[S_2] & \kfield[S_2] \text{\ in\ } M(\cl,\op;\op)\oplus M(\op,\cl;\op)&\kfield \\
\text{degree} &0& 1&0 &0 &0\\ 
\hline
\text{element}&\mathfrak l_2&\mathfrak c_2&\mathfrak n_{0,2}&\mathfrak n_{1,1}&\mathfrak n_{1,0}\\
\text{color}& M(\cl,\cl;\cl)& M(\cl,\cl;\cl)&M(\op,\op;\op)&M(\cl,\op;\op)&M(\cl;\op)\\
\text{representation}&\sgn_2 &\sgn_2 & \kfield[S_2] &  \kfield[S_2] \text{\ in\ } M(\cl,\op;\op)\oplus M(\op,\cl;\op)&\kfield \\
\text{degree} & 0 & -1 & 0 & 0 & -1 \\ 
\hline
\end{array}$$

\medskip

Given elements $\{x_1,\ldots,x_n\}$ with specific colors, representation and degrees, the $\mathbb S$-module $<x_1,\ldots,x_n>$ is the 
$\mathbb S$-module generated by these elements, with the action of the symmetric group indicated by the representation of the elements.
For example $<e_{1,1}>$ is the $\mathbb S$-module $M$ where $M(\cl,\op;\op)=\kfield e_{1,1}$, $M(\op,\cl;\op)=\kfield e_{1,1}\cdot (21)$ and is zero elsewhere.

\subsubsection{Notation for operads}

The operad $\Ger$, whose algebras are Gerstenhaber algebras is the operad 
$\cal F(E_\Ger,R_\Ger)$ with $E_\Ger=<f_2,g_2>$ and $R_\Ger$ is the space of relations given by
\begin{equation*}
\begin{aligned}
f_2(\ide\otimes f_2)&=f_2(f_2\otimes \ide), \\
g_2(g_2\otimes\ide)\cdot ((123)+(231)+(312))&=0, \\
g_2(\ide\otimes f_2)&=f_2(g_2\otimes\ide)+f_2(\ide\otimes g_2)\cdot (213).
\end{aligned} 
\end{equation*}

The suboperad generated by $f_2$ is the operad $\Com=\cal F(<f_2>,R_{\Com})$ where $R_\Com$ is the first relation. The suboperad generated by $g_2$ is the operad $\Lambda^{-1}\Lie$.

\medskip

The Koszul dual of the operad $\Ger$ is $\Ger^!=\Lambda\Ger$ (see e.g. \cite{GetJon94}). It is described as
$\cal F(E_{\Lambda\Ger},R_{\Lambda\Ger})$ with $E_{\Lambda\Ger}=<\mathfrak l_2,\mathfrak c_2>$ and $R_{\Lambda\Ger}$ is the space of relations given by
\begin{equation*}
\begin{aligned}
\mathfrak c_2(\ide\otimes \mathfrak c_2)&=-\mathfrak c_2(\mathfrak c_2\otimes \ide), \\
\mathfrak l_2(\mathfrak l_2\otimes\ide)\cdot ((123)+(231)+(312))&=0, \\
\mathfrak l_2(\ide\otimes \mathfrak c_2)&=\mathfrak c_2(\mathfrak l_2\otimes\ide)+\mathfrak c_2(\ide\otimes \mathfrak l_2)\cdot (213).
\end{aligned} 
\end{equation*}
The suboperad generated by $\mathfrak l_2$ is the operad $\Lie=\cal F(<\mathfrak l_2>,R_{\Lie})$ where $R_\Lie$ is the second relation. 
The suboperad generated by $\mathfrak c_2$ is the operad $\Lambda\Com$.

\medskip

The operad $\Ass$ is described as $\cal F(<e_{0,2}>,R_\Ass)$ where $R_\Ass$ is the relation 
$e_{0,2}(\ide\otimes e_{0,2})=e_{0,2}(e_{0,2}\otimes\ide).$ Note that we also use this notation replacing $e_{0,2}$ by $\mathfrak n_{0,2}$.

\section{The homology operads $\hscvor$ and $\hsc$}
We prove in this section that the homology operad $\hscvor$  is a quadratic Koszul operad and that the homology operad $\hsc$ is a quadratic-linear Koszul operad, extending the results obtained for the zero-th homology of $\SCvor$ and $\SC$ in \cite{HoeLiv12}.

\subsection{The operad $\hscvor$ is Koszul} Recall the Theorem

\begin{thm}[A. Voronov \cite{Voronov99}]
 An algebra over $\hscvor$ is a pair $(G,A)$, where $G$ is a Gerstenhaber algebra and $A$ is an associative algebra over the commutative ring $G$. 
\end{thm}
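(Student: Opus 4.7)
The approach is to present $\hscvor$ by generators and relations read off the homology of low-arity components of $\SCvor$, and then to confirm the presentation is complete.

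First decompose by color. Since $\SCvor(n,0;\cl)=\cal D_2(n)$, F.~Cohen's computation gives $\hscvor(-,0;\cl)=\Ger$, with generators the commutative product $f_2\in H_0(\cal D_2(2))$ and the degree-one bracket $g_2\in H_1(\cal D_2(2))$; this endows $G$ with its Gerstenhaber structure. For the pure open part (with $m\geq 1$), $\SCvor(0,m;\op)$ is homotopy equivalent to the space of $m$ distinct ordered points on $\R$ modulo the real affine group, a disjoint union of $m!$ contractible components; hence $\hscvor(0,-;\op)=\Ass$ concentrated in degree zero with generator $e_{0,2}$, endowing $A$ with its associative product. For the mixed components I would use a Fadell--Neuwirth-type fibration $\SCvor(n,m;\op)\to\SCvor(0,m;\op)$ obtained by forgetting the closed disks, whose fiber deformation retracts onto a configuration space of $n$ ordered points in the open upper half plane; in particular $\hscvor(1,1;\op)=\kfield\cdot e_{1,1}$ in degree zero, providing the action map $G\otimes A\to A$.

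Next, extract the quadratic relations by inspecting the arity-three mixed spaces $\SCvor(2,1;\op)$ and $\SCvor(1,2;\op)$. Besides the $\Ger$-relations on $\{f_2,g_2\}$ and the $\Ass$-relation on $e_{0,2}$, one finds the compatibility $e_{1,1}(f_2\otimes\ide)=e_{1,1}(\ide\otimes e_{1,1})$, saying the commutative product of $G$ acts associatively on $A$, together with $e_{0,2}(e_{1,1}\otimes\ide)=e_{1,1}(\ide\otimes e_{0,2})$ and its symmetric variant $e_{0,2}(\ide\otimes e_{1,1})=e_{1,1}(\ide\otimes e_{0,2})\cdot\sigma$ for a suitable permutation $\sigma$, saying the commutative action commutes with the associative product of $A$. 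These are exactly the relations defining the algebraic structure in the statement. Composed operations such as $e_{1,1}(g_2\otimes\ide)$, although non-zero in $\hscvor$, evaluate on any algebra to $(g_1,g_2,a)\mapsto[g_1,g_2]\cdot a$ and are automatic consequences of the Gerstenhaber bracket and the commutative action, not additional structure.

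The principal obstacle is proving completeness of the presentation: that $\hscvor$ is quadratic, generated in arity two by $\{f_2,g_2,e_{0,2},e_{1,1}\}$, and carries no further relations. I would handle this by exhibiting a distributive law between $\Ger$ and $\Ass$ along $e_{1,1}$, verifying Markl's compatibility conditions, and using Koszulness of $\Ger$ and $\Ass$ to compute the Poincar\'e series of the resulting quadratic operad. Matching this series against $\dim H_*(\SCvor(n,m;\op))$, itself computed via the fibration above, then forces the candidate operad to coincide with $\hscvor$. The distributive-law machinery developed in the subsequent subsections supplies the infrastructure needed to execute this comparison.
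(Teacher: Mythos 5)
The paper does not actually prove this statement: it is quoted verbatim from Voronov \cite{Voronov99} (``Recall the Theorem''), and the authors only reformulate it operadically in Corollary \ref{C:opscvor}. So there is no internal proof to compare against, and your argument has to be judged as a reconstruction of Voronov's. Your outline is the standard and essentially correct route: $\SCvor(n,0;\cl)=\cal D_2(n)$ gives the Gerstenhaber structure by F.~Cohen's computation; $\SCvor(0,m;\op)$ is homotopy discrete with $m!$ contractible components, giving $\Ass$; the forgetful fibration onto the open configurations has fibre homotopy equivalent to $\cal D_2(n)$ over a homotopy-discrete base, so $H_*(\SCvor(n,m;\op))\cong\Ger(n)\otimes\Ass(m)$; and the relations you list in arity three are exactly the ones in Corollary \ref{C:opscvor}.

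Two steps in your completeness argument need more care. First, ``a distributive law between $\Ger$ and $\Ass$ along $e_{1,1}$'' is not literally Markl's framework: $e_{1,1}$ is a mixed-colour binary generator belonging to neither $\Ger$ nor $\Ass$, so there is no map $\Ass\circ\Ger\to\Ger\circ\Ass$ to check compatibility for. What does work is either a coloured generalization or the rewriting/confluence method (this is precisely how the paper itself handles the analogous completeness question for $(\hscvor)^!$, via Alm's computation); the output is that the candidate quadratic operad $\cal F(E_v,R_v)$ has mixed components of dimension exactly $\dim\Ger(n)\cdot\dim\Ass(m)$, since the relations let one push every closed operation outside every open one. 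Second, matching Poincar\'e series against $\dim H_*(\SCvor(n,m;\op))$ does not by itself ``force the candidate operad to coincide with $\hscvor$'': the natural map $\cal F(E_v,R_v)\to\hscvor$ is an isomorphism only once you also know it is surjective, i.e.\ that $\hscvor$ is generated in arity two. That is a separate geometric input --- the K\"unneth classes $\gamma\otimes\sigma\in\Ger(n)\otimes\Ass(m)$ must be identified with operadic composites of $f_2,g_2,e_{0,2},e_{1,1}$, which follows from the compatibility of the forgetful maps with operadic insertion together with Cohen's generation result for $H_*(\cal D_2)$ --- and it should be stated explicitly rather than absorbed into the dimension count.
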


An algebra over the commutative ring $G$ corresponds to a degree $0$ map $\lambda : G \otimes A \to A$ satisfying:
\begin{equation*}
\lambda(cc',a)=\lambda(c,\lambda(c',a))=(-1)^{|c||c'|}\lambda(c',\lambda(c,a)) \ \mbox{ and }\  \lambda(c,aa')=\lambda(c,a)a'=(-1)^{|a||c|} a\lambda(c,a').
\end{equation*}

As a consequence

\begin{cor}\label{C:opscvor}The operad $\hscvor$ has a quadratic presentation $\mathcal F(E_v,R_v)$ where
$$E_v=<f_2,g_2,e_{0,2},e_{1,1}>$$ and $R_v$ is the sub-$\mathbb S$-module of $\mathcal F^{(2)}(E_v)$ generated by the relations:
\begin{itemize}
\item $R_{\Ger}$, for the Gerstenhaber structure defined by $f_2$ and $g_2$ and $R_\Ass$ for the associativity of $e_{0,2}$;
\item  $e_{1,1}$ is an action:

 $e_{1,1}(\ide\otimes e_{1,1})=e_{1,1}(f_2\otimes\ide),$

$e_{1,1}(\ide\otimes e_{0,2})=e_{0,2}(e_{1,1}\otimes \ide)=e_{0,2}(\ide\otimes e_{1,1})\cdot (213)$.
\end{itemize}

\end{cor}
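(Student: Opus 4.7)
The strategy is to deduce the presentation directly from Voronov's theorem, which identifies $\hscvor$-algebras as pairs $(G,A)$ of the stated form. My plan is to produce a morphism from the presented operad to $\hscvor$, check the relations against Voronov's description, obtain surjectivity at once, and establish injectivity via a distributive-law argument.

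First, I would define $\varphi : \mathcal F(E_v, R_v) \to \hscvor$ on generators by sending $f_2$ and $g_2$ to the commutative product and degree-$1$ bracket of the Gerstenhaber part of $\hscvor$, $e_{0,2}$ to the associative product on the open inputs, and $e_{1,1}$ to the operation implementing the action $\lambda$ of the closed part on the open part. Every element of $R_v$ then maps to zero by Voronov's theorem: $R_\Ger$ encodes the Gerstenhaber axioms, $R_\Ass$ encodes associativity, the identity $e_{1,1}(\ide\otimes e_{1,1})=e_{1,1}(f_2\otimes\ide)$ expresses that $\lambda$ is a module action, and the coupled equations involving $e_{0,2}$ express that $\lambda(c,-)$ is a derivation of the associative product (the sign hidden in the permutation $(213)$ coming from the Koszul convention). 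Surjectivity of $\varphi$ is also immediate: Voronov's theorem exhibits every operation in a free $\hscvor$-algebra as an iterated composition of these five generators, so $f_2,g_2,e_{0,2},e_{1,1}$ generate $\hscvor$ as a $2$-colored operad.

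The remaining point is injectivity, which I would handle via a distributive law. The rewriting rules $e_{1,1}(\ide\otimes e_{1,1})\mapsto e_{1,1}(f_2\otimes\ide)$ and $e_{1,1}(\ide\otimes e_{0,2})\mapsto e_{0,2}(e_{1,1}\otimes\ide)$ set up a distributive law between the closed sub-operad $\Ger$ generated by $f_2,g_2$ and the open sub-operad $\Ass$ generated by $e_{0,2}$, equipped with its module structure on the pair $(\cl,\op)$ via $e_{1,1}$. The convergence result for distributive laws (Markl, Loday--Vallette, in the $2$-colored setting) then yields an explicit normal form for $\mathcal F(E_v,R_v)(n,m;x)$, and comparing its dimension with $\hscvor(n,m;x)$, which can be read off from Voronov's explicit description of the free algebras, forces $\varphi$ to be an isomorphism. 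The main obstacle is precisely this last step: one must check that the distributive law is convergent, ruling out hidden relations at arity three, and then confirm that the resulting Poincar\'e series matches that of $\hscvor$. Fortunately Voronov's theorem itself essentially imposes the same distributive law at the level of algebras, so the two counts should agree term by term.
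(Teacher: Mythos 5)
Your overall strategy is sound, but it is considerably heavier than what the paper does, and its final step is left open. The paper derives the corollary in one line from Voronov's theorem: by the convention recalled in the preliminaries, describing an operad in vector spaces over a field is equivalent to describing its algebras (more precisely, its free-algebra monad). Since Voronov's theorem identifies $\hscvor$-algebras with pairs $(G,A)$ carrying exactly the structure encoded by $E_v$ and $R_v$ --- once ``$A$ is an algebra over the commutative ring $G$'' is unpacked into the two displayed identities for $\lambda$, which are precisely the three quadratic relations for $e_{1,1}$ --- the free $\hscvor$-algebra on a pair $(V_\cl,V_\op)$ coincides with the free $\mathcal F(E_v,R_v)$-algebra, and extracting multilinear parts gives the isomorphism of $2$-colored operads at once, with no separate surjectivity or injectivity check. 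Your morphism $\varphi$, the verification that $R_v$ maps to zero, and the generation statement are all fine (note there are four generators, not five). The place where your write-up falls short is injectivity: you reduce it to convergence of a distributive law between the $\Ger$-part and the $\Ass$-part and to a comparison of Poincar\'e series with $\dim\hscvor(n,m;\op)=\dim\bigl(\Ger(n)\otimes\Ass(m)\bigr)$, but you neither verify confluence at arity three in the mixed colors nor carry out the dimension count; ``the two counts should agree term by term'' is not an argument. This step can be completed along the lines you indicate --- the paper itself runs exactly such a distributive-law argument later, for $q\hsc=\cal F(e_{1,0})\circ\hscvor$ --- but it is simpler to observe that it is unnecessary once the categories of algebras are identified compatibly with the forgetful functors.
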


\begin{lem}\label{L:scvor} Algebras over the Koszul dual operad $(\hscvor)^!$ of $\hscvor$ are of the form $(H,A,\rho)$ where $(H,[,],\times)$ is a $\Lambda\Ger$-algebra, $A$ is an associative algebra, and $\rho:H\otimes A\rightarrow A$ is a map of degree $0$ that satisfies the relations
\begin{equation}\label{E:scvordual}
\begin{aligned}
\rho([h,h'],a)=&\rho(h,\rho(h',a))-(-1)^{|h||h'|} \rho(h',\rho(h,a))\\
\rho(h,a\cdot a')=&\rho(h,a)\cdot a'+(-1)^{|a||h|}a\cdot \rho(h,a')\\
\rho(h\times h',a)=&0
\end{aligned}
\end{equation}
\end{lem}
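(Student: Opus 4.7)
The plan is to compute the Koszul dual of the explicit quadratic presentation of $\hscvor$ furnished by Corollary \ref{C:opscvor}. Since $\hscvor = \mathcal F(E_v,R_v)$ is binary quadratic, I can apply the formula $(\hscvor)^! = \mathcal F(E_v^\vee, R_v^\perp)$ recalled in Section \ref{S:Koszul}; once a presentation of $(\hscvor)^!$ is in hand, reading off the algebra structure gives the statement. The degree-$(-1)$ generator $\mathfrak c_2$ forces us to work in the graded framework, but the dualization procedure is unchanged.

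First I would identify $E_v^\vee$. Dualizing linearly, negating the internal degree, and tensoring the arity-$2$ part with $\sgn_2$ sends $f_2 \mapsto \mathfrak l_2$ (antisymmetric, degree $0$), $g_2 \mapsto \mathfrak c_2$ (antisymmetric, degree $-1$), $e_{0,2} \mapsto \mathfrak n_{0,2}$ (regular representation, degree $0$), and $e_{1,1}\mapsto\mathfrak n_{1,1}$ (regular, degree $0$), which matches the conventions fixed in Section~2. Next I would split $R_v^\perp$ according to the output color profile of the operations in $\mathcal F^{(2)}(E_v^\vee)$.

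For the pure closed profile, $R_v$ restricts to $R_{\Ger}$ and its orthogonal is $R_{\Lambda\Ger}$ by the classical Getzler--Jones computation $\Ger^! = \Lambda\Ger$. For the pure open profile, $R_v$ restricts to the associativity of $e_{0,2}$ and its orthogonal is associativity of $\mathfrak n_{0,2}$, since $\Ass^! = \Ass$. Thus on the pure colors one recovers exactly the $\Lambda\Ger$-structure on $(H,[\,,\,],\times)$ and the associative structure on $A$. The only new computation is therefore in the two mixed profiles $M(\cl,\cl,\op;\op)$ and $M(\cl,\op,\op;\op)$.

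For the mixed profile $M(\cl,\cl,\op;\op)$, a basis of weight-$2$ monomials in $\mathcal F^{(2)}(E_v^\vee)$ is given by $\mathfrak n_{1,1}(\ide\otimes\mathfrak n_{1,1})$, $\mathfrak n_{1,1}(\mathfrak l_2\otimes\ide)$ and $\mathfrak n_{1,1}(\mathfrak c_2\otimes\ide)$ (and their $S_2$-translates); the single relation $e_{1,1}(\ide\otimes e_{1,1}) = e_{1,1}(f_2\otimes\ide)$ cuts out a codimension-$2$ subspace, and its orthogonal is spanned by the Lie-action relation and by $\mathfrak n_{1,1}(\mathfrak c_2\otimes\ide) = 0$, i.e.\ relations (1) and (3). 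For the mixed profile $M(\cl,\op,\op;\op)$, a basis consists of $\mathfrak n_{1,1}(\ide\otimes\mathfrak n_{0,2})$, $\mathfrak n_{0,2}(\mathfrak n_{1,1}\otimes\ide)$ and $\mathfrak n_{0,2}(\ide\otimes\mathfrak n_{1,1})\cdot(213)$; the two derivation-type relations from Corollary \ref{C:opscvor} span a codimension-$1$ subspace whose orthogonal is the single derivation relation (2). Translating the resulting quadratic presentation of $(\hscvor)^!$ into the algebra dictionary yields exactly the relations \eqref{E:scvordual}.

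The main obstacle is the careful sign bookkeeping: the Koszul signs in (1) and (2) must be shown to emerge from the chosen pairing between $E_v$ and $E_v^\vee$ combined with the internal degree $|\mathfrak c_2| = -1$ and the sign-twisted $S_2$-actions on the antisymmetric generators $\mathfrak l_2,\mathfrak c_2$. Once these signs are tracked, the orthogonal computation is a finite-dimensional linear-algebra exercise in each color profile, and the lemma follows.
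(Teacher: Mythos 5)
Your proposal is correct and follows essentially the same route as the paper's proof: both invoke the Ginzburg--Kapranov formula $(\hscvor)^!=\cal F(E_v^\vee,R_v^\bot)$ for the binary quadratic presentation of Corollary \ref{C:opscvor}, reduce the pure closed and pure open color profiles to $\Ger^!=\Lambda\Ger$ and $\Ass^!=\Ass$, and determine the mixed profiles $M(\cl,\cl,\op;\op)$ and $M(\cl,\op,\op;\op)$ by a finite-dimensional orthogonality count. Your dimension bookkeeping (which tracks the full $\mathbb S$-module closure of each relation and the degree splitting coming from $|\mathfrak c_2|=-1$) is if anything slightly more explicit than the counts stated in the paper, but it leads to the same list of relations \eqref{E:scvordual}.
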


Note that the first two equations indicate that the map induced by $\rho$ from $H$ to $\End(A)$ has values in $\Der(A)$ and is a morphism of Lie algebras.

\begin{proof} Because $\hscvor$ has a binary quadratic presentation, we can use the direct computation of its Koszul dual operad presented in section \ref{S:Koszul}. 
Let us denote by $(\mathfrak l_2,\mathfrak c_2,\mathfrak n_{0,2},\mathfrak n_{1,1})$  the dual basis of 
$(f_2,g_2,e_{0,2},e_{1,1})$ in $E_v^{\vee}$. The degree of $\mathfrak c_2$ is $-1$ and all the others elements have degree $0$.

The Koszul dual operad of $\hscvor$ is
$(\hscvor)^!=\cal F(E_{v}^\vee)/(R_{v}^{\bot})$. 
 The pairing between $E_v$ and $E_{v}^{\vee}$ induces a pairing 
between $\cal F^{(2)}(E_{v})$ and 
$\cal F^{(2)}(E_v^{\vee})$.  One gets that $R_{v}^{\bot}(\cl,\cl,\cl;\cl)$ is the ideal defining $\Ger^!$, that is $R_{\Lambda\Ger}$.
Similarly $R_{v}^{\bot}(\op,\op,\op;\op)$ is the orthogonal of the associativity relation for $e_{0,2}$, that is, the associativity relation for $\mathfrak n_{0,2}$.

The space $\cal F(E_{v})(\cl,\cl,\op;\op)_0$ has dimension 3 and $R_{v}(\cl,\cl,\op;\op)_0$ has dimension 1.
As a consequence, the dimension of $R_{v}^{\bot}(\cl,\cl,\op;\op)_0$ is 2 and corresponds to the first relation. 

The space 
$\cal F(E_{v})(\cl,\op,\op;\op)$ has dimension 6 and $R_{v}(\cl,\op,\op;\op)$ has dimension 2.
Hence the dimension of $R_{v}^{\bot}(\cl,\op,\op;\op)$ is 4 and corresponds to the second relation.

The space $\cal F(E_{v})(\cl,\cl,\op;\op)_1$ has dimension 1 and $R_{v}(\cl,\cl,\op;\op)_1$ has dimension 0.
As a consequence, the dimension of $R_{v}^{\bot}(\cl,\cl,\op;\op)_{-1}$ is 1 and corresponds to the third relation. 
\end{proof}

In terms of generators and relations, it expresses as:

\begin{cor}\label{C:scvordual} The operad $(\hscvor)^!$ has a binary quadratic presentation $\cal F(E_{v^!},R_{v^!})$, where
$$E_{v^!}=< \mathfrak l_2, \mathfrak c_2, \mathfrak n_{0,2}, \mathfrak n_{1,1}>$$
and  $R_{v^!}$ is the sub-$\mathbb S$-module of $\mathcal F^{(2)}(E_{v^!})$ generated by the relations:
\begin{itemize}
\item $R_{\Lambda\Ger}$, for the $\Lambda\Ger$-structure defined by $\mathfrak l_2$ and $\mathfrak c_2$ and $R_\Ass$ for the
associativity of $\mathfrak n_{0,2}$;
\item  relations for $\mathfrak n_{1,1}$ :

 $\mathfrak n_{1,1}(\mathfrak l_2\otimes\ide)=\mathfrak n_{1,1}(\ide\otimes \mathfrak n_{1,1})\cdot( \ide-(213)), $

$\mathfrak n_{1,1}(\ide\otimes \mathfrak n_{0,2})=\mathfrak n_{0,2}(\mathfrak n_{1,1}\otimes \ide)+\mathfrak n_{0,2}(\ide\otimes \mathfrak n_{1,1})\cdot (213)$,

$\mathfrak n_{1,1}(\mathfrak c_2\otimes\ide)=0$.
\end{itemize}

\end{cor}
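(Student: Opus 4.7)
The plan is to read off the operadic presentation directly from the description of $(\hscvor)^!$-algebras given in Lemma \ref{L:scvor}. Since algebras over a quadratic operad $\cal F(E,R)$ are precisely $E$-indexed operations on a graded vector space (or here, a pair of graded vector spaces with the color convention) subject to $R$, the passage from the lemma to the corollary amounts to translating each algebraic identity into an element of $\cal F^{(2)}(E_{v^!})$ and checking that no identity is missed.

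First I would fix the dictionary: $\mathfrak l_2,\mathfrak c_2\in E_{v^!}(\cl,\cl;\cl)$ act as the bracket $[,]$ and product $\times$; $\mathfrak n_{0,2}\in E_{v^!}(\op,\op;\op)$ acts as the product $\cdot$ on $A$; and $\mathfrak n_{1,1}\in E_{v^!}(\cl,\op;\op)$ acts as $\rho$. The $\mathbb S$-module $E_{v^!}$ is $E_v^\vee$ as computed in the proof of Lemma \ref{L:scvor}, so the representations and degrees are as claimed (in particular $|\mathfrak c_2|=-1$). The first bullet of the corollary is then a restatement of the fact that $(H,[,],\times)$ is a $\Lambda\Ger$-algebra and $(A,\cdot)$ is associative, so it is immediate from Lemma \ref{L:scvor}.

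For the second bullet, I would translate the three equations \eqref{E:scvordual} one by one. The first identity $\rho([h,h'],a)=\rho(h,\rho(h',a))-(-1)^{|h||h'|}\rho(h',\rho(h,a))$ becomes, using the Koszul sign convention $(f\otimes g)(\underline a\otimes \underline b)=(-1)^{|a||g|}f(a)\otimes g(b)$, exactly
\[
\mathfrak n_{1,1}(\mathfrak l_2\otimes\ide)=\mathfrak n_{1,1}(\ide\otimes \mathfrak n_{1,1})\cdot(\ide-(213)),
\]
since the permutation $(213)$ interchanges the two closed inputs and produces the Koszul sign $(-1)^{|h||h'|}$. The derivation identity in the second line of \eqref{E:scvordual} is read off similarly as
\[
\mathfrak n_{1,1}(\ide\otimes \mathfrak n_{0,2})=\mathfrak n_{0,2}(\mathfrak n_{1,1}\otimes \ide)+\mathfrak n_{0,2}(\ide\otimes \mathfrak n_{1,1})\cdot(213),
\]
the sign $(-1)^{|a||h|}$ again coming from swapping the closed input $h$ past the open input $a$. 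The vanishing $\rho(h\times h',a)=0$ gives the last relation $\mathfrak n_{1,1}(\mathfrak c_2\otimes\ide)=0$ without any sign ambiguity.

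The only thing that is not purely a renaming is verifying that $R_{v^!}$ contains nothing more: but this was exactly the dimension count carried out at the end of the proof of Lemma \ref{L:scvor}, where the components of $R_v^\bot$ in the mixed arities $(\cl,\cl,\op;\op)$ and $(\cl,\op,\op;\op)$ were shown to be spanned by (respectively) the first and third relations on $\mathfrak n_{1,1}$, and the second relation on $\mathfrak n_{1,1}$. Thus no obstacle beyond careful bookkeeping of Koszul signs arises, and the main point to watch is the sign coming from the degree shift in $\mathfrak c_2$ and the swap of closed inputs in the first $\mathfrak n_{1,1}$ relation.
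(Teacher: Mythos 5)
Your proposal is correct and follows essentially the same route as the paper: the paper presents this corollary as a direct reformulation of Lemma \ref{L:scvor} (whose proof already contains the Ginzburg--Kapranov computation of $E_v^\vee$ and the dimension counts for $R_v^\bot$ in each mixed arity), and your translation of the three identities of \eqref{E:scvordual} into operadic relations, with the completeness guaranteed by those same dimension counts, is exactly the intended argument.
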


\begin{thm}The operad $\hscvor$ is Koszul.
\end{thm}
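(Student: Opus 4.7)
My plan is to use the method of distributive laws between Koszul operads, as advertised by the authors in the introduction and in parallel to the degree-zero argument of \cite{HoeLiv12}. The idea is to present $\hscvor$ as the ``twisted product'' of the Gerstenhaber operad on the closed colour with a smaller Koszul 2-coloured operad built from $e_{0,2}$ and $e_{1,1}$, and to verify the associated coherence condition.

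First I would split the generating $\mathbb S$-module of Corollary \ref{C:opscvor} as $E_v = V \oplus W$ with $V = \langle f_2, g_2 \rangle$ and $W = \langle e_{0,2}, e_{1,1}\rangle$, and correspondingly split $R_v$ into three pieces: the Gerstenhaber relations $R_\Ger \subset \mathcal F^{(2)}(V)$; the associativity of $e_{0,2}$ together with the Leibniz rule $e_{1,1}(\ide \otimes e_{0,2}) = e_{0,2}(e_{1,1}\otimes \ide) = e_{0,2}(\ide \otimes e_{1,1})\cdot(213)$, which lie entirely in $\mathcal F^{(2)}(W)$ and form a relation space $R_W$; and finally the single mixed quadratic relation $e_{1,1}(\ide \otimes e_{1,1}) - e_{1,1}(f_2 \otimes \ide)$, which plays the role of the distributive law $D$.

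Next I would verify Koszulness of the two sub-operads. The closed-colour piece $\mathcal F(V)/R_V$ is $\Ger$, Koszul by Getzler--Jones. The open-colour piece $\mathcal F(W)/R_W$ is a 2-coloured operad whose algebras consist of an associative algebra together with a family of (unrelated) derivations indexed by the closed colour; Koszulness here follows from a direct PBW argument, since every operation admits a canonical normal form as an $\Ass$-monomial in the open inputs with each closed input acting as a single outermost derivation, and the quadratic relations are exactly the rewriting rules producing this normal form. Finally, I would check that $D$ satisfies the diamond (confluence) condition of Markl: every weight-three critical pair arising from overlapping rewritings reduces to a common normal form. Once this is done, Markl's theorem (in its 2-coloured formulation, as used in \cite{HoeLiv12}) yields Koszulness of $\hscvor$.

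The main obstacle will be the diamond check, especially critical pairs involving $g_2$. Since there is no relation directly connecting $g_2$ and $e_{1,1}$, an overlap such as $e_{1,1}(\ide \otimes e_{1,1})(f_2 \otimes \ide \otimes \ide)$, which can be rewritten using $D$ from either the top or the middle $e_{1,1}$, must converge to the same normal form. Closure of the diamond in these cases will rely on combining the Jacobi identity and the Leibniz relation $g_2(\ide \otimes f_2) = f_2(g_2 \otimes \ide) + f_2(\ide \otimes g_2) \cdot (213)$ from $R_\Ger$ with the action relation, in the same spirit as the degree-zero confluence analysis of \cite{HoeLiv12}.
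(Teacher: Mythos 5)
There is a genuine gap: the splitting you propose is not a distributive law, so Markl's theorem cannot be invoked. For a distributive law between $\cal A=\cal F(V)/(R_V)$ and $\cal B=\cal F(W)/(R_W)$, the mixed part $D$ of the relations must be the graph of a rewriting map defined on \emph{all} of $W\circ_{(1)}V$ and landing in $V\circ_{(1)}W$ (or the reverse), which is exactly what forces the $\mathbb S$-module isomorphism $\cal F(V\oplus W)/(R_V\oplus D\oplus R_W)\cong \cal B\circ\cal A$. With $V=\langle f_2,g_2\rangle$ and $W=\langle e_{0,2},e_{1,1}\rangle$ one of the two spaces $V\circ_{(1)}W$, $W\circ_{(1)}V$ is zero for colour reasons (the generators in $V$ have only closed inputs, those in $W$ have open output), so the only admissible distributive law is the zero map; your relation $e_{1,1}(\ide\otimes e_{1,1})-e_{1,1}(f_2\otimes\ide)$ instead identifies an element of $W\circ_{(1)}V$ with an element of $W\circ_{(1)}W$, and it does not rewrite all of $W\circ_{(1)}V$ (the composite $e_{1,1}(g_2\otimes\ide)$, i.e.\ $\lambda([c,c'],a)$, is touched by no relation and survives as a normal form). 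Concretely, the relation $\lambda(cc',a)=\lambda(c,\lambda(c',a))$ collapses iterated actions \emph{inside} the would-be factor generated by $W$, so $\hscvor$ is strictly smaller than $\cal F(W)/(R_W)\circ\Ger$ (already in biarity $(2,1)$ the three degree-zero monomials $\lambda(c_1,\lambda(c_2,a))$, $\lambda(c_2,\lambda(c_1,a))$, $\lambda(c_1c_2,a)$ become one); the isomorphism that any distributive law would force therefore fails. A secondary slip: the relation $e_{1,1}(\ide\otimes e_{0,2})=e_{0,2}(e_{1,1}\otimes\ide)=e_{0,2}(\ide\otimes e_{1,1})\cdot(213)$ is the centrality/module condition $\lambda(c,aa')=\lambda(c,a)a'=\pm a\lambda(c,a')$, not the Leibniz rule, so the closed colour does not act by derivations on $\hscvor$-algebras (it does on $(\hscvor)^!$-algebras).

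Your closing instinct --- checking confluence of overlaps --- is the right one, but it must be run as the rewriting method for the \emph{whole} quadratic presentation, not as the diamond condition of a distributive law, and it is much cleaner on the Koszul dual. That is what the paper does: $(\hscvor)^!$ carries the rewriting rules $[x_1,x_2]\bullet a\mapsto x_1\bullet(x_2\bullet a)-x_2\bullet(x_1\bullet a)$, $(x_1\times x_2)\bullet a\mapsto 0$ and $x\bullet(a_1\cdot a_2)\mapsto(x\bullet a_1)\cdot a_2+a_1\cdot(x\bullet a_2)$, together with those of $\Ass$ and $\Lambda\Ger$; all critical monomials (several already treated by Alm) are confluent, so $(\hscvor)^!$ is Koszul and hence so is $\hscvor$. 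Note also that in the paper the distributive-law machinery is used one level up --- to deduce Koszulness of $q\hsc=\cal F(e_{1,0})\circ\hscvor$ from that of $\hscvor$ --- so it takes the present theorem as input rather than providing a proof of it.
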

 
\begin{proof} In order to prove that $\hscvor$ is Koszul, we prove that $(\hscvor)^!$ is Koszul, using the rewriting method explained in \cite{LodVal}, and using a part of the computation made by Alm in \cite[AppendixA]{Alm11}. Recall that an algebra over $(\hscvor)^!$ is given by the following data:
\begin{itemize}
\item A  $\Lambda\Ger$-algebra $H$. We denote by $[x_1,x_2]$ the degree $0$ bracket and by $x_1\times x_2$ the degree $-1$ product. 
\item An associative algebra $A$. We denote by $a_1\cdot a_2$ the degree $0$ product.
\item A map $\rho: H\otimes A\rightarrow A$. We denote by $x\bullet a$ the element $\rho(x,a)$.
\end{itemize} 
The rewriting rules are
\begin{align}
(a_1\cdot a_2)\cdot a_3 &\mapsto a_1\cdot (a_2\cdot a_3) \nonumber \\
(x_1\times x_2)\times x_3 & \mapsto -x_1\times (x_2\times x_3)\nonumber \\
[[x_1,x_2],x_3] &\mapsto -[[x_2,x_3],x_1]-[[x_3,x_1],x_2] \nonumber \\
[x_1,x_2\times x_3] & \mapsto [x_1,x_2]\times x_3+x_2\times [x_1,x_3] \nonumber\\
x_1\bullet(a_1\cdot a_2) &\mapsto (x_1\bullet a_1)\cdot a_2 + a_1\cdot(x_1\bullet a_2) \nonumber\\
(x_1\times x_2)\bullet a & \mapsto  0 \nonumber\\
[x_1,x_2] \bullet a_1 &\mapsto x_1\bullet(x_2\bullet a_1) - x_2\bullet(x_1\bullet a_1). \nonumber
\end{align}
In order to study the confluence of critical monomials, it is enough to study the one involving both $x's$ and $a's$ because the one involving only $a's$ corresponds to the computation for the operad $\Ass$, and the one involving only $x's$ corresponds to the computation for the operad $\Lambda\Ger$. We know that a way to prove the Koszulity of these 2 operads is precisely to use the confluence of the critical monomials.

Hence the critical monomials left are
 \(x_1\bullet ((a_1\cdot a_2)\cdot a_3)\), \([x_1,x_2] \bullet(a_1\cdot a_2)\), \([[x_1,x_2],x_3] \bullet a_1\), \((x_1\times x_2)\bullet(a_1\cdot a_2)\), \( ((x_1\times x_2)\times x_3)\bullet a\) and
 \([x_1,x_2\times x_3]\bullet a\). The first  three have been proven to be confluent by J. Alm. The fourth critical monomial can be rewritten either as
\begin{align}
(x_1\times x_2) \bullet(a_1\cdot a_2) &\mapsto ((x_1\times x_2)\bullet a_1)\cdot a_2+a_1\cdot ((x_1\times x_2)\bullet a_2) \nonumber \\
&\mapsto 0 \nonumber
\end{align}
or $(x_1\times x_2) \bullet(a_1\cdot a_2) \mapsto 0$. The same is true for the fifth critical  monomial.

The critical monomial \([x_1,x_2\times x_3]\bullet a\) can be rewritten either as

\begin{align}
[x_1,x_2\times x_3 ] \bullet a &\mapsto x_1\bullet((x_2\times x_3)\bullet a)-(x_2\times x_3)\bullet(x_1\bullet a) \nonumber \\
&\mapsto 0 \nonumber
\end{align}
or
\begin{align}
[x_1,x_2\times x_3 ] \bullet a &\mapsto  ([x_1,x_2]\times x_3)\bullet a + (x_2\times [x_1,x_3])\bullet a \nonumber\\
&\mapsto 0 \nonumber 
\end{align}

Hence, all the critical monomials are confluent and $(\hscvor)^!$ is Koszul. As a consequence $\hscvor$ is a Koszul operad.
\end{proof}

\subsection{The operad $\hsc$ is Koszul}
In this section we follow closely the article by Imma Galvez-Carrillo, Andy Tonks and Bruno Vallette
\cite{GTV12} and our paper \cite{HoeLiv12} in order to prove that the homology operad $\hsc$ is Koszul. 

%In fact, $\hsc$ is not a quadratic operad,
%and we need first to describe it in terms of generators and relations that are quadratic and linear. By projecting
%the relations onto the quadratic part, we obtain an operad $q\hsc$ which turns out to be Koszul. By definition \cite[Appendix A.3]{GTV12}, 
%one has that $\hsc$ is Koszul.

Recall from the computation of F. Cohen and A. Voronov and from \cite{HoeLiv12} that

\begin{prop}An $\hsc$-algebra (G,A,f) is a Gerstenhaber algebra $G$ and an associative algebra $A$ together with a central morphism of associative algebras $f:G\rightarrow A$. 
\end{prop}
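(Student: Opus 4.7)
The plan is to reduce the statement to two well-known computations and then assemble them degree by degree. Since $\SC(n,0;\cl)=\cal D_2(n)$ as topological operads, the restriction of $\hsc$ to purely closed inputs is F.~Cohen's operad $\Ger$, which equips $G$ with its Gerstenhaber structure given by $f_2$ and $g_2$ together with the relations $R_\Ger$.

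I would then identify the mixed and open operations by inspecting low arity. The space $\SC(1,0;\op)$ is a single point (a closed disk collapsed to the boundary), producing a degree $0$ generator $e_{1,0}$ which will play the role of the morphism $f:G\to A$. The remaining generators, $e_{0,2}$ for the associative product on $A$ and $e_{1,1}$ for the action of $G$ on $A$, are exactly those already treated in $\hscvor$ via Corollary~\ref{C:opscvor}.

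The relations involving $e_{1,0}$ are then extracted from the one-dimensional components of the relevant configuration spaces. From $\SC(2,0;\op)$ one reads the multiplicativity relation $e_{1,0}(f_2)=e_{0,2}(e_{1,0}\otimes e_{1,0})$, which says that $f$ is a morphism of associative algebras. From $\SC(1,1;\op)$ one reads the centrality relation $e_{0,2}(e_{1,0}\otimes\ide)=e_{0,2}(\ide\otimes e_{1,0})\cdot(21)$ together with a compatibility relation expressing $e_{1,1}$ through $e_{1,0}$ and $e_{0,2}$. The bracket $g_2$ interacts with $e_{1,0}$ only via relations forced by centrality and the Gerstenhaber axioms, so no additional data is required.

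The degree $0$ portion of this presentation agrees with the description of $H_0(\SC)$ obtained by the authors in \cite{HoeLiv12}, and combining it with F.~Cohen's computation of the closed sector yields the claimed algebraic structure. The main obstacle is verifying that no further relations appear in higher arity or higher degree; this is confirmed by the Koszul-duality analysis developed in the subsequent subsections, where the quadratic-linear presentation is shown to be complete.
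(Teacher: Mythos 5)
Your overall strategy---read the closed sector off from $\SC(n,0;\cl)=\cal D_2(n)$ via F.~Cohen's computation, import the open and mixed operations from Voronov's description of $\hscvor$, and then extract the relations involving $e_{1,0}$ from the low-arity spaces $\SC(2,0;\op)$ and $\SC(1,1;\op)$---is the right one, and it is essentially how the result is established in the sources the paper relies on (F.~Cohen, \cite{Voronov99}, \cite{HoeLiv12}); note that the paper itself gives no proof here but simply recalls the statement from those references. Your identification of the generators and of the arity-$\leq 3$ relations is correct, with the small caveat that in the minimal presentation of Corollary~\ref{C:hsccubic} the operation $e_{1,1}$ is not an independent generator (it equals $e_{0,2}(e_{1,0}\otimes\ide)$ at the level of algebras) and is only reintroduced as a redundant generator in Proposition~\ref{P:preshsc} to render the presentation quadratic-linear.

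The genuine gap is your final step. Verifying that the listed generators and relations exhaust $\hsc$---that nothing new appears in higher arity or higher degree---is the substantive part of the computation, and you discharge it by appealing to ``the Koszul-duality analysis developed in the subsequent subsections.'' That appeal is circular: Corollary~\ref{C:hsccubic}, Proposition~\ref{P:preshsc} and Theorem~\ref{T:SCKoszul} all take the present Proposition as their input, so they cannot certify that the presentation is complete; nor do they claim to, since they prove Koszulity of a presentation already assumed to describe $\hsc$. What is actually needed is the $\mathbb S$-module identification $H_*(\SC)(n,m;\op)\cong\Ger(n)\otimes\Ass(m)$ --- the Cohen--Voronov computation of the homology of the configuration spaces $C(n,m)$, which the paper invokes again in the proof of Theorem~\ref{T:ss} --- against which one compares, arity by arity and degree by degree, the dimensions of the operad presented by your candidate generators and relations (for instance via a distributive-law decomposition over $\Ger$, $\Ass$ and $e_{1,0}$). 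Either carry out that dimension count or cite it explicitly; as written, the completeness claim is unsupported.
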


\begin{cor}\label{C:hsccubic} The operad $\hsc$ has a presentation of the form $\cal F(E')/(R')$ where
\[ 
E'=<f_2,g_2,e_{0,2},e_{1,0}>
\] 
and the space of relations $R'$ is the sub-$\mathbb S$-module of $\mathcal F^{(2)}(E)\oplus  \mathcal F^{(3)}(E)$
defined by the relations
\begin{itemize}
\item $R_\Ger$ for the Gerstenhaber structure induced by $f_2$ and $g_2$ and $R_\Ass$ for the associativity of $e_{0,2}$;
\item  centrality of $e_{1,0}$: $e_{0,2}(e_{1,0}\otimes\ide)=e_{0,2}(\ide\otimes e_{1,0})\cdot (21)$;
\item a quadratic-cubical relation:  $e_{1,0}(f_2)=f_2(e_{1,0}\otimes e_{1,0}).$
\end{itemize}
\end{cor}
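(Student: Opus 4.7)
The plan is to read off the presentation directly from the preceding Proposition, which identifies $\hsc$-algebras with triples $(G,A,f)$ where $G$ is Gerstenhaber, $A$ is associative, and $f \colon G \to A$ is a central morphism of associative algebras. The point is that each structural datum corresponds to a generator and each axiom to a relation; once we list them exhaustively, we invoke the operad/algebra dictionary to conclude.

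First I would match generators to operations: $f_2,g_2$ encode the commutative product and the degree-$1$ Lie bracket of $G$; $e_{0,2}\in M(\op,\op;\op)$ encodes the associative product of $A$; and $e_{1,0}\in M(\cl;\op)$, a unary operation sending a closed input to an open output, encodes the $\kfield$-linear map $f$. Note that no generator in $M(\cl,\op;\op)$ (i.e.\ no $e_{1,1}$) appears: the action of $G$ on $A$ is not given as extra data but factors through $f$ via the associative product of $A$.

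Next I would translate each axiom in the Proposition into a relation on $\mathcal F(E')$:
\begin{itemize}
\item $G$ being Gerstenhaber yields $R_\Ger$, and $A$ being associative yields $R_\Ass$; both are quadratic.
\item Centrality of $f$, i.e.\ $f(x)\cdot a = a\cdot f(x)$, yields the quadratic relation $e_{0,2}(e_{1,0}\otimes \ide)=e_{0,2}(\ide\otimes e_{1,0})\cdot(21)$.
\item Multiplicativity of $f$ on the commutative product of $G$, i.e.\ $f(x_1x_2)=f(x_1)\cdot f(x_2)$, yields $e_{1,0}(f_2)=e_{0,2}(e_{1,0}\otimes e_{1,0})$, which is the mixed quadratic-cubical relation (weight $2$ on the left, weight $3$ on the right).
\end{itemize}
Conversely, any algebra over $\mathcal F(E')/(R')$ decomposes into its closed and open components $(G,A)$ and a unary map $f=e_{1,0}\colon G\to A$, and the listed relations translate back into exactly the axioms of the Proposition. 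Hence the category of $\mathcal F(E')/(R')$-algebras coincides with the category of $\hsc$-algebras, and the operad/algebra correspondence gives $\hsc\cong\mathcal F(E')/(R')$.

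The main subtlety, and the step I would check most carefully, is \emph{completeness}: that no further relation involving $g_2$ and $e_{1,0}$ is needed. This amounts to the observation that the Proposition imposes no compatibility between $f$ and the Lie bracket of $G$ (the target $A$ carries no bracket), so in particular no relation of the form ``$e_{1,0}(g_2)=\cdots$'' is forced. Equivalently, the bracket $g_2$ remains \emph{internal} to the closed color and only interacts with $e_{1,0}$ through the Gerstenhaber and associative axioms that are already present. This is precisely why the relation expressing multiplicativity of $f$ appears as a genuinely quadratic-cubical (rather than quadratic-linear) relation at this stage, and motivates the reformulation through suspension carried out in the next subsection.
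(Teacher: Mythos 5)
Your proposal is correct and follows essentially the same route as the paper, which states the corollary as an immediate translation of the preceding proposition (the Cohen--Voronov description of $\hsc$-algebras as triples $(G,A,f)$) into generators and relations. Note that your form of the mixed relation, $e_{1,0}(f_2)=e_{0,2}(e_{1,0}\otimes e_{1,0})$, is the one consistent with the colors and with the substitution $e_{1,1}=e_{0,2}(e_{1,0}\otimes\ide)$ used later in Proposition \ref{P:preshsc}; the paper's $f_2(e_{1,0}\otimes e_{1,0})$ does not typecheck ($f_2$ has closed inputs while $e_{1,0}$ has open output) and is evidently a misprint.
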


This corollary shows clearly that this presentation is quadratic and cubic. In order to apply the theory of \cite{GTV12}, one needs a presentation which is quadratic and linear. However, we will see in Proposition \ref{P:final} that the quadratic operad $\cal F(E')/(qR')$ obtained by killing the cubical elements in the relations of $R'$ plays also an important role for the study of $\hsc$.

The idea to obtain a presentation with quadratic-linear relations of $\hsc$ is 
 to add a new generator, in order to replace the quadratic-cubical relation by quadratic-linear relations. This new generator $e_{1,1}$,
will correspond at the level of algebras to the operation $\lambda(c,a):=f(c)a$.
Consequently, we introduce new relations in the operad corresponding to the relations  $f(c)a=af(c)=\lambda(c,a)$ and
$\lambda(c,f(c'))=f(cc')=f(c)f(c')$, that are present in the algebra setting.

Recall the theory explained  in \cite{GTV12} for quadratic-linear operads. A {\sl quadratic-linear} operad is of the form $\mathcal F(E)/(R)$ with $R\subset \mathcal F^{(1)}(E)\oplus  \mathcal F^{(2)}(E)$.
Such an $R$ is called quadratic-linear. We also ask the presentation to satisfy:
%\begin{itemize}
\[
%\item[(ql1)] $
\mbox{ (ql1): }R\cap E=\{0\} \quad \mbox{ and } \quad
%\item[(ql2)] $
\mbox{ (ql2): }(R\otimes E+E\otimes R)\cap \mathcal F^{(2)}(E) \subset R\cap \mathcal F^{(2)}(E).
\]
%\end{itemize}

\begin{prop}\label{P:preshsc}
The operad $\hsc$ has a presentation $\mathcal F(E)/(R)$, where
\[ 
E=<f_2,g_2,e_{0,2},e_{1,1},e_{1,0}>
\] 
and the space of relations $R$ is the sub-$\mathbb S$-module of $\mathcal F^{(1)}(E)\oplus  \mathcal F^{(2)}(E)$
defined by $R=R_v\oplus R(e_{1,0})$, where $R_v$ is the space of quadratic relations of $\hscvor$ and $R(e_{1,0})$ is the sub-$\mathbb S$-module of $\cal F(E)$ generated by the following relations:
\begin{itemize}
\item two quadratic-linear relations:  $e_{1,1}=e_{0,2}(e_{1,0}\otimes\ide)$ and $e_{1,1}=e_{0,2}(\ide\otimes e_{1,0})\cdot (21)$,
\item a new quadratic relation: $e_{1,1}(\ide\otimes e_{1,0})=e_{1,0}(f_2).$
\end{itemize}
Moreover this presentation satisfies \emph{(ql1)} and \emph{(ql2)}.
\end{prop}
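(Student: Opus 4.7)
The approach proceeds in three stages: identify $\mathcal F(E)/(R)$ with $\hsc$ by comparison with the quadratic-cubical presentation of Corollary \ref{C:hsccubic}, then verify conditions (ql1) and (ql2) respectively. I expect (ql2) to be the main obstacle.

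First I would define a morphism $\phi\colon \mathcal F(E)/(R)\to\hsc$ by sending $f_2, g_2, e_{0,2}, e_{1,0}$ to their counterparts in $\cal F(E')/(R')$ and setting $\phi(e_{1,1}) := e_{0,2}(e_{1,0}\otimes\ide)$. Under this assignment, $r_1$ becomes tautological, $r_2$ becomes the centrality relation $e_{0,2}(e_{1,0}\otimes\ide)=e_{0,2}(\ide\otimes e_{1,0})\cdot(21)$ of $R'$, and the new quadratic relation $r_3$ becomes $e_{0,2}(e_{1,0}\otimes e_{1,0}) = e_{1,0}(f_2)$, i.e.\ the quadratic-cubical relation of $R'$. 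The Voronov relations from $R_v$ involving $e_{1,1}$ all follow by substitution: the compatibility with $e_{0,2}$ is an immediate consequence of associativity of $e_{0,2}$ and centrality of $e_{1,0}$, while the action axiom $e_{1,1}(\ide\otimes e_{1,1})=e_{1,1}(f_2\otimes\ide)$ becomes $e_{0,2}(e_{1,0}\otimes e_{0,2}(e_{1,0}\otimes\ide))=e_{0,2}(e_{1,0}(f_2)\otimes\ide)$, which reduces to the cubical relation through associativity of $e_{0,2}$. The inverse is obtained by formally adjoining $e_{1,1}$ to the presentation of Corollary \ref{C:hsccubic} via $r_1$ as a redundant generator; then $r_2$ follows from $r_1$ and centrality, and $r_3$ follows from $r_1$ and the cubical relation, yielding the isomorphism.

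For (ql1), I note that $R_v$ and $r_3$ lie in $\mathcal F^{(2)}(E)$, so only $r_1,r_2$ contribute linear parts, both equal to $e_{1,1}$. A linear combination $\alpha r_1 + \beta r_2$ has linear part $(\alpha+\beta)e_{1,1}$ and quadratic part $-\alpha\,e_{0,2}(e_{1,0}\otimes\ide) - \beta\,e_{0,2}(\ide\otimes e_{1,0})\cdot(21)$, whose two summands are distinct trees in $\mathcal F^{(2)}(E)(\cl,\op;\op)$. Hence the quadratic part vanishes only if $\alpha=\beta=0$, giving $R\cap E=\{0\}$.

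The main obstacle is (ql2): the inclusion $(R\otimes E + E\otimes R)\cap \mathcal F^{(2)}(E) \subset R\cap \mathcal F^{(2)}(E)$. Since only $r_1,r_2$ are quadratic-linear, the $\mathcal F^{(2)}$-component of an element $\xi \in R\otimes E + E\otimes R$ arises only from the linear parts $e_{1,1}$ of $r_1,r_2$ composed with a single generator, while the $\mathcal F^{(3)}$-component collects the cubic tails from the quadratic parts of $r_1,r_2$ together with full compositions of $R_v \cup \{r_3\}$ with generators. The plan is to enumerate, for each $r_i$ ($i=1,2$), each admissible insertion position (the $\cl$ or $\op$ leaf) and each generator that can be inserted, both the $\mathcal F^{(2)}$-part and the cubic tail. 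The element $\xi$ lies in $\mathcal F^{(2)}(E)$ precisely when the cubic tails cancel as an element of $\mathcal F^{(3)}(E)$; for each such syzygy in $\mathcal F^{(3)}$, one reads off the corresponding $\mathcal F^{(2)}$-part and checks that it lies in $R\cap\mathcal F^{(2)}(E) = R_v \oplus \langle r_3\rangle$. The number of cases is finite because $E$ has five generators with prescribed colors, forcing only a small set of compatible insertions. The crucial case is $r_1 \circ_\op e_{1,0}$, where the cubic tail $-e_{0,2}(e_{1,0}\otimes e_{1,0})$ matches, via a composition of $r_3$ with appropriate generators (and Voronov relations from $R_v$ to move $e_{1,1}$ past $e_{0,2}$), the cubic contribution needed to bring the $\mathcal F^{(2)}$-part $e_{1,1}(\ide\otimes e_{1,0})$ into the form $e_{1,0}(f_2)$ modulo $r_3$; since $r_3 \in R\cap\mathcal F^{(2)}(E)$, the syzygy is absorbed. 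Carrying out this case analysis for the remaining insertions (composing $r_1,r_2$ with $f_2,g_2$ at the closed leaf, and pre-composing with $e_{0,2}, e_{1,1}$ at the open output) establishes (ql2). Alternatively, one can verify (ql2) by comparing the weight-two dimensions of $\mathcal F(E)/(qR)$ and $\hsc$ using the Koszulity of $\hscvor$ already established, which forces (ql2) via the general framework of \cite{GTV12}.
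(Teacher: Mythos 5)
Your proposal is correct and is essentially the paper's (implicit) argument: the paper states Proposition \ref{P:preshsc} without a formal proof, its justification being precisely your first step of adjoining $e_{1,1}$ as the redundant generator $\lambda(c,a)=f(c)a$ via a Tietze transformation of the quadratic-cubical presentation of Corollary \ref{C:hsccubic}, with (ql1) and (ql2) left as a routine check; your explicit verification (in particular identifying the composition of the linear part $e_{1,1}$ with $e_{1,0}$ as the place where the new relation $r_3$ is needed for (ql2)) is sound and more detailed than what is written. Two small caveats that do not affect the main line: $R\cap\mathcal F^{(2)}(E)$ also contains the centrality element $r_1-r_2$, not only $R_v$ and the $\mathbb S$-module generated by $r_3$; and your closing ``alternative'' for (ql2) is circular, since (ql2) is a hypothesis of the framework of \cite{GTV12} rather than a consequence of it.
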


Here we recall the definition of a Koszul quadratic-linear operad given in \cite{GTV12}.

\begin{defn}  Let $q$ denote the projection $\mathcal F(E)\epi \mathcal F^{(2)}(E)$ and let $qR$ be the image of $R$ under this projection.
A quadratic-linear operad $\mathcal P=\mathcal F(E)/(R)$ satisfying (ql1) and (ql2) is said to be {\sl Koszul} if
$q\mathcal P:=\mathcal F(E)/(qR)$ is a quadratic Koszul operad. Its Koszul dual cooperad is 
$(\mathcal P)^{\ac}=((q\mathcal P)^{\ac},\partial_\varphi)$ where the differential $\partial_{\varphi}$ depends on the quadratic-linear relations.
\end{defn}

In the case of $\hsc$ presented as in Proposition \ref{P:preshsc},
the projection of $R=R_v\oplus R(e_{1,0})$ onto $\mathcal F^{(2)}(E)$ is 
$qR=R_v\oplus qR(e_{1,0})$, where $qR(e_{1,0})$ is the sub-$\mathbb S$-module of  $\mathcal F^{(2)}(E)$
generated by the relations
$0=e_{0,2}(e_{1,0}\otimes\ide)$, $0=e_{0,2}(e_{1,0}\otimes\ide)$ and $e_{1,1}(\ide\otimes e_{1,0})=e_{1,0}(f_2).$

Consequently a $q\hsc$-algebra  is an $\hscvor$-algebra $(G,A,\lambda)$ endowed with a degree $0$ linear map $f:G\rightarrow A$ satisfying $f(c)a=af(c)=0$  and $\lambda(c,f(c'))=f(cc')$ for all $c,c'\in G, a\in A$.
As in \cite{HoeLiv12}, the operad $q\hsc$ is obtained as the result of a distributive law between the operad $\hscvor$ and $\cal F(e_{1,0})$.
The distributive law is given by
\begin{equation}\label{E:distributive}
\begin{array}{ccc}
 \hscvor\circ \cal F(e_{1,0}) &\rightarrow &\cal F(e_{1,0})\circ \hscvor \\
e_{0,2}(e_{1,0}\otimes\ide),\ e_{0,2}(e_{1,0}\otimes\ide) &\mapsto& 0 \\
e_{1,1}(\ide\otimes e_{1,0})& \mapsto & e_{1,0}(f_2).\\
\end{array}
\end{equation}

%We sum up the result in the next Proposition.
\begin{prop}\label{P:qhsc} The operad $q\hsc$ is identical to the operad $\cal F(e_{1,0})\circ \hscvor$, with composition given by the distributive law (\ref{E:distributive}).
\end{prop}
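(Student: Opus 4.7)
The plan is to recognize $q\hsc$ as the operad produced by a distributive law $\lambda$ between $\hscvor$ and the free operad $\cal F(e_{1,0})$ in the sense of Chapter~8 of \cite{LodVal}, and then invoke the standard coherence criterion. Reading off the presentation of $q\hsc$ given just before the proposition, the generators $E_v \oplus \langle e_{1,0}\rangle$ and the relations $qR = R_v \oplus qR(e_{1,0})$ split precisely into (generators and relations of $\hscvor$) plus (the single generator of $\cal F(e_{1,0})$, with no relations) plus (the graph of the rewriting rule (\ref{E:distributive})). Thus, by construction, $q\hsc$ coincides with the coproduct-with-distributive-law operad $\cal F(e_{1,0}) \vee_\lambda \hscvor$, and the distributive law theorem asserts that the natural surjection $\cal F(e_{1,0}) \circ \hscvor \twoheadrightarrow q\hsc$ is an isomorphism of $\mathbb{S}$-modules -- yielding the desired operadic identification -- as soon as $\lambda$ is coherent at weight~$3$.

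Because $\cal F(e_{1,0})$ has no internal relations and $e_{1,0}$ admits no self-composition (its input is closed while its output is open), only one side of the coherence requires checking: for each generator of $R_v$ possessing at least one open leaf, substituting $e_{1,0}$ at that leaf and applying $\lambda$ along the two possible orders of reduction must produce the same element of $\cal F(e_{1,0}) \circ \hscvor$. Running through the generators of $R_v$ listed in Corollary~\ref{C:opscvor}: the Gerstenhaber relations $R_\Ger$ have only closed leaves and require no check; the associativity of $e_{0,2}$ reduces on both sides to $0=0$ at each of its three open leaves via $e_{0,2}\circ_i e_{1,0} \mapsto 0$; the action axiom $e_{1,1}(\ide \otimes e_{1,1}) = e_{1,1}(f_2 \otimes \ide)$, after substituting $e_{1,0}$ at its unique open leaf and applying $\lambda$ twice on the left-hand side and once on the right-hand side, reduces to
\[ e_{1,0}\bigl(f_2(\ide, f_2(\ide,\ide))\bigr) = e_{1,0}\bigl(f_2(f_2(\ide,\ide),\ide)\bigr), \]
which holds thanks to the associativity of $f_2$ encoded in $R_v$; finally, the centrality relations $e_{1,1}(\ide\otimes e_{0,2}) = e_{0,2}(e_{1,1}\otimes\ide) = e_{0,2}(\ide\otimes e_{1,1})\cdot(213)$ collapse on every side to $0$ under $\lambda$ at each of their two open leaves.

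With the coherences verified, the distributive law theorem furnishes the isomorphism of operads $q\hsc \cong \cal F(e_{1,0}) \circ \hscvor$, with the operad structure on the right-hand side being precisely the one induced by $\lambda$. The main obstacle is simply the finite case analysis of the second paragraph; it remains manageable because each sub-check either collapses immediately to zero or reduces to the associativity of $f_2$, and a closely analogous distributive law argument for the zeroth homology has already been executed by the authors in \cite{HoeLiv12}, which provides a direct template for the present computation.
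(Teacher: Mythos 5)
Your proof is correct and follows essentially the same route as the paper, which states Proposition \ref{P:qhsc} without detailed proof, appealing (``as in \cite{HoeLiv12}'') to the rewriting-rule/distributive-law machinery of \cite[Chapter 8]{LodVal}. You have merely written out the weight-$3$ coherence check that the paper leaves implicit, and your case analysis (everything collapses to $0$ except the action axiom, which reduces to the associativity of $f_2$) is the right one.
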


\begin{thm}\label{T:SCKoszul} The operads $q\hsc$ and $\hsc$ are Koszul operads.
\end{thm}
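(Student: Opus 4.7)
The plan is to prove the two assertions in order: first the Koszulity of the quadratic operad $q\hsc$, and then derive the Koszulity of the quadratic-linear operad $\hsc$ directly from the definition recalled just before the theorem.

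For $q\hsc$, the strategy is to invoke the classical distributive law criterion for Koszulity (Markl's theorem, see \cite{LodVal}, Section 8.6): given two Koszul operads $\cal A$ and $\cal B$ and a rewriting rule $\cal B\circ\cal A\to\cal A\circ\cal B$, the resulting operad $\cal A\vee_\lambda\cal B$ is Koszul provided its underlying $\mathbb S$-module is isomorphic to $\cal A\circ\cal B$. I would apply this with $\cal A=\cal F(e_{1,0})$ and $\cal B=\hscvor$, and with the distributive law (\ref{E:distributive}). The hypotheses to check are the following. First, $\hscvor$ is Koszul by the theorem proved in the previous subsection. Second, $\cal F(e_{1,0})$ is trivially Koszul: since $e_{1,0}$ is a unary color-changing generator in $M(\cl;\op)$ and its output color $\op$ never appears as an input of $e_{1,0}$, no iterated composition is possible, so $\cal F(e_{1,0})=I\oplus\langle e_{1,0}\rangle$ is concentrated in weights $0$ and $1$. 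Third, the fact that the distributive law yields an operad whose underlying $\mathbb S$-module is exactly $\cal F(e_{1,0})\circ\hscvor$ is precisely the content of Proposition \ref{P:qhsc}. Altogether, Markl's theorem yields the Koszulity of $q\hsc$.

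For $\hsc$, the deduction is then immediate from the definition. By Proposition \ref{P:preshsc}, the presentation $\hsc=\cal F(E)/(R)$ with $R=R_v\oplus R(e_{1,0})$ is quadratic-linear and satisfies (ql1) and (ql2). By the definition of a Koszul quadratic-linear operad recalled just before the theorem, $\hsc$ is Koszul if and only if the associated quadratic operad $q\hsc=\cal F(E)/(qR)$ is Koszul, which has just been established. This concludes the proof.

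The main obstacle is the first step, and more precisely the verification that the distributive law (\ref{E:distributive}) indeed satisfies the hypotheses of Markl's criterion, i.e.\ that the natural surjection $\cal F(e_{1,0})\circ\hscvor\epi q\hsc$ is an isomorphism of $\mathbb S$-modules. This is where Proposition \ref{P:qhsc} does the essential work: once one knows that each element of $q\hsc$ admits a unique normal form ``compositions in $\hscvor$ followed by at most one application of $e_{1,0}$'', the rewriting rules in (\ref{E:distributive}) guarantee that the composition in $q\hsc$ is controlled by the product structure of $\cal F(e_{1,0})\circ\hscvor$. Everything else, in particular the Koszulity of $\cal F(e_{1,0})$ and the passage from $q\hsc$ to $\hsc$, is either trivial or purely formal from \cite{GTV12}.
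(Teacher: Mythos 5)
Your proposal is correct and follows essentially the same route as the paper: the paper's proof likewise deduces the Koszulity of $q\hsc=\cal F(e_{1,0})\circ\hscvor$ from the distributive-law criterion of \cite[Chapter 8]{LodVal} (using that $\hscvor$ and $\cal F(e_{1,0})$ are Koszul and that Proposition \ref{P:qhsc} identifies the underlying $\mathbb S$-module), and then concludes for $\hsc$ directly from the definition of a Koszul quadratic-linear operad via Proposition \ref{P:preshsc}. You merely spell out the hypotheses of Markl's theorem more explicitly than the paper does.
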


\begin{proof} From  \cite[Chapter8]{LodVal}, one has that $q\hsc=\cal F(e_{1,0})\circ \hscvor$ is Koszul since $\hscvor$ and $\cal F(e_{1,0})$ are Koszul colored operads. By definition, it means that $\hsc$ is a quadratic-linear Koszul operad.
\end{proof}

\subsection{Description of the Koszul dual operad $(\hsc)^!$ of $\hsc$.}
In Proposition \ref{P:qhsc}, we have described $q\hsc$ as a distributive law between $\hscvor$ and $\cal F(e_{1,0})$. As a consequence
$(q\hsc)^!=(\hscvor)^! \circ \cal F(e_{1,0})^!$, with the operad structure given by the signed dual of the distributive law (\ref{E:distributive}). 
Recall from Corollary \ref{C:scvordual} that  $\{\mathfrak l_2,\mathfrak c_2,\mathfrak n_{0,2},\mathfrak n_{1,1}\}$ is the dual basis of $\{f_2,g_2,e_{2,0},e_{1,1}\}$ that generates
$E_{v^!}$. From relation (\ref{E:Koszuldual}), one has $\cal F(e_{1,0})^!=\cal F(\mathfrak n_{1,0})$ where $\mathfrak n_{1,0}$ has degree $-1$. The dual of the distributive law is given by

\begin{equation*}
\begin{array}{ccc}
\cal F(\mathfrak n_{1,0})\circ (\hscvor)^! &\rightarrow &(\hscvor)^!\circ \cal F(\mathfrak n_{1,0})\\
\mathfrak n_{1,0}(\mathfrak l_2) & \mapsto & \mathfrak n_{1,1}(\ide\otimes \mathfrak n_{1,0}) \cdot (\ide-(21))\\
\mathfrak n_{1,0}(\mathfrak c_2) & \mapsto & 0
\end{array}
\end{equation*}
Consequently, a $(q\hsc)^!$-algebra is an $(\hscvor)^!$-algebra $(H,A,\rho)$ satisfying conditions of  lemma \ref{L:scvor}, together with a linear map $\beta:H\rightarrow A$ of degree $-1$ satisfying 
\begin{equation}\label{E:beta}
\begin{aligned}
\beta([h,h'])=&(-1)^{|h|}\rho(h,\beta(h'))-(-1)^{|h||h'|+|h'|}\rho(h',\beta(h))\\
 \beta(h\times h')=&0.
 \end{aligned}
 \end{equation}

In order to understand the structure of an $(\hsc)^!$-algebra it is then enough to understand the differential on the operad $(q\hsc)^!$ that comes from the non-quadraticity of the operad $\hsc$.

%In the previous paragraph, we have considered the operad $q\hsc=\cal F(E)/qR$, where $qR$ is the image of $R$ by the projection $q:\cal F(E)\epi \cal F^{(2)}(E)$. 
%Furthermore, we have seen that algebras over the operad $q\hsc$ are $\hscvor$-algebras $(G,A)$ endowed with a map $f:G\rightarrow A$ such that $f(c)a=af(c)=0$ and $\rho(c,f(c'))=f(cc')$.

Let $\varphi:qR\rightarrow E$ defined by
 \begin{equation*}
 \begin{cases}\varphi(e_{0,2}(e_{1,0}\otimes\ide))=\varphi(e_{0,2}(\ide\otimes e_{1,0})\cdot (21))=e_{1,1},\\
 \varphi(R_v)=0,\\ 
 \varphi(e_{1,1}(\ide\otimes e_{1,0})-e_{1,0}(f_2))=0.\end{cases}
\end{equation*}

The Koszul dual cooperad of $q\hsc$ is $(q\hsc)^{\ac}=C(sE,s^2qR)$, with the notation
of section \ref{S:Koszul}. To $\varphi$ is associated the composite map
$(q\hsc)^{\ac} \epi s^2qR\xrightarrow{s^{-1}\varphi}sE.$
There exists a unique
coderivation $\widetilde{\partial}_\varphi\, :\, (q\hsc)^{\ac}\rightarrow \cal F^c(sE)$  which extends this map. Moreover, $\widetilde{\partial}_\varphi$  induces a square zero coderivation $\partial_\varphi$ on
the Koszul dual cooperad $(q\hsc)^{\ac}$.
The Koszul dual  cooperad of $\hsc$ is by definition
$\hsc^{\ac}=(C(sE,s^2qR),\partial_\varphi).$

%The Koszulity of the operad $\hsc$ implies the quasi-isomorphism (see \cite[Theorem 38]{GTV12}): 
%\begin{equation}\label{E:Koszulresol}
 %\Omega(\hsc^{\ac})\xrightarrow{\sim} \hsc.
%\end{equation}
%Note that $ \Omega(\hsc^{\ac})$ is not a minimal model of $\hsc$ since its differential has a linear part coming from the differential on $\hsc^{\ac}$.
%In order to understand the differential in the cooperad $\hsc^{\ac}$ it is usually more convenient to understand the Koszul dual operad $\hsc^!$. 

Recall from (\ref{E:Koszuldual}) that
$(q\hsc)^!=(\Lambda(q\hsc^{\ac}))^*$.
As a consequence, $\hsc^!=((q\hsc)^!,d_{\varphi})$, where $d_{\varphi}$ is obtained as a combination of transpose and signed suspension of $\partial_\varphi$. Namely, 
$\hsc^!$ is a differential graded operad and we have the following Proposition.

\begin{prop} 
An algebra over $\hsc^!$ consists in a dg $\Lambda\Ger$- algebra $(H,[,],\times, d_L)$, a dg associative algebra $(A,d_A)$, an action 
$\rho : H\otimes A \rightarrow A $
and a degree $-1$ map $\beta:H \rightarrow A$ such that, for all $l \in H, a \in A$, we have $d_A(\beta(l)) = -\beta(d_Ll)$ and that
the relations (\ref{E:scvordual}) and (\ref{E:beta}) are satisfied. Moreover, the following relation is satisfied:
\begin{equation}\label{E:diff}
d_A\rho(l,a)=\rho(d_Ll,a)+(-1)^{|l|}\rho(l,d_Aa)+\beta(l)a-(-1)^{|a|(|l|+1)}a\beta(l). 
\end{equation}
\end{prop}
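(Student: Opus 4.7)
The plan is to apply the general principles of Koszul duality for quadratic-linear operads recalled after Theorem \ref{T:SCKoszul}. By definition $\hsc^{!} = ((q\hsc)^{!}, d_{\varphi})$, so an algebra over $\hsc^{!}$ is nothing but a pair $(H,A)$ carrying a $(q\hsc)^{!}$-algebra structure (the operations $[,]$, $\times$, $\cdot$, $\rho$, $\beta$ subject to the relations already listed in Lemma \ref{L:scvor} and in (\ref{E:beta})), together with differentials $d_{L}$ on $H$ and $d_{A}$ on $A$ that are compatible with these operations up to the correction term imposed by $d_{\varphi}$. The proof thus reduces to determining the derivation $d_{\varphi}$ explicitly on the generators of $(q\hsc)^{!}$ and reading off the corresponding compatibility relations.

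First I would identify $d_{\varphi}$ on generators. By construction the coderivation $\partial_{\varphi}$ on $(q\hsc)^{\ac}$ extends the composite $(q\hsc)^{\ac} \twoheadrightarrow s^{2}qR \xrightarrow{s^{-1}\varphi} sE$; dualising and applying the suspension $\Lambda$ produces a derivation of $(q\hsc)^{!}$ whose value on a generator $x^{*}$ is the signed transpose of the restriction of $\varphi$ to terms whose image is $x$. Since $\varphi$ is non-zero only on $e_{0,2}(e_{1,0}\otimes \ide)$ and $e_{0,2}(\ide\otimes e_{1,0})\cdot (21)$, both mapping to $e_{1,1}$, the derivation $d_{\varphi}$ must vanish on every generator of $(q\hsc)^{!}$ except $\mathfrak{n}_{1,1}$, and one finds
\[
d_{\varphi}(\mathfrak{n}_{1,1}) \;=\; \mathfrak{n}_{0,2}(\mathfrak{n}_{1,0}\otimes \ide) \;-\; \mathfrak{n}_{0,2}(\ide\otimes \mathfrak{n}_{1,0})\cdot (21).
\]

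Next I would translate this operadic data into algebraic relations, using the standard fact that an algebra over a dg operad is an algebra over the underlying graded operad whose operations are chain maps up to the image of $d_{\varphi}$. The vanishing of $d_{\varphi}$ on $\mathfrak{l}_{2}$, $\mathfrak{c}_{2}$ and $\mathfrak{n}_{0,2}$ asserts that $d_{L}$ and $d_{A}$ are graded derivations for $[,]$, $\times$ and the associative product, i.e.\ that $(H,d_{L})$ is a dg $\Lambda\Ger$-algebra and $(A,d_{A})$ is a dg associative algebra. The vanishing of $d_{\varphi}$ on $\mathfrak{n}_{1,0}$, combined with $|\mathfrak{n}_{1,0}|=-1$, amounts to $d_{A}\beta + \beta\,d_{L}=0$, i.e.\ $d_{A}(\beta(l))=-\beta(d_{L}l)$. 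Evaluating the chain-map defect of $\rho$ produces $(d_{\varphi}\mathfrak{n}_{1,1})(l,a)$, which by direct Koszul-sign computation equals $\beta(l)\cdot a - (-1)^{|a|(|l|+1)}\,a\cdot \beta(l)$ and therefore yields relation (\ref{E:diff}) on the nose.

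The main technical obstacle I anticipate is the sign bookkeeping when dualising $\varphi$ through $\Lambda$ and $s$. In particular, the relative sign between the two terms in $d_{\varphi}(\mathfrak{n}_{1,1})$, and the factor $(-1)^{|a|(|l|+1)}$ in (\ref{E:diff}), both depend on pinning down the identification $\mathcal{P}^{!}=(\Lambda \mathcal{P}^{\ac})^{*}$ concretely and on carefully propagating the degree $-1$ of $\mathfrak{n}_{1,0}$ through the Koszul sign rule in the composite $\mathfrak{n}_{0,2}(\ide\otimes \mathfrak{n}_{1,0})\cdot (21)$; once these signs are fixed the relations of the statement follow immediately.
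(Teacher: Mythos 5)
Your proposal is correct and follows essentially the same route as the paper: the Proposition is there a direct unwinding of the construction $\hsc^!=((q\hsc)^!,d_\varphi)$ given just before it, and the explicit value $d_\varphi(\mathfrak n_{1,1})=\mathfrak n_{0,2}(\mathfrak n_{1,0}\otimes\ide)-\mathfrak n_{0,2}(\ide\otimes\mathfrak n_{1,0})\cdot(21)$ (vanishing on the other generators) that you derive by dualising $\varphi$ is exactly what appears in the Corollary following the statement. Your translation of the chain-map defect of $\rho$ into relation (\ref{E:diff}), and of the closedness of $\mathfrak n_{1,0}$ (of degree $-1$) into $d_A(\beta(l))=-\beta(d_Ll)$, matches the paper's signs.
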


Note that relation (\ref{E:diff}) says that the map $\beta : H \to A$ 
is central  up to homotopy having the map $\rho: H \otimes A \to A$
as the homotopy operator. %(see Figure \ref{fig:h-10}). 
For a geometrical description of the above relations in terms of the Kontsevich compactification \cite{kont:defquant-pub}, we refer the reader to \cite{Hoefel09,KS06b,HoeLiv12}.

This proposition translates at the level of operads in the following Corollary.

\begin{cor}The differential graded operad $(\hsc)^!$ has a  presentation $\cal F(E_{!},R_{!})$, where
$$E_{!}=< \mathfrak l_2, \mathfrak c_2, \mathfrak n_{0,2}, \mathfrak n_{1,1}, \mathfrak n_{1,0}>$$
and the vector space $R_{!}$ is the sub-$\mathbb S$-module of $\mathcal F^{(2)}(E_{!})$ generated by the relations:
\begin{itemize}
\item $R_{\Lambda\Ger}$, for the $\Lambda\Ger$-structure defined by $\mathfrak l_2$ and $\mathfrak c_2$ and $R_\Ass$ for the 
associativity of $\mathfrak n_{0,2}$;
\item  relations for $\mathfrak n_{1,1}$ :

 $\mathfrak n_{1,1}(\mathfrak l_2\otimes\ide)=\mathfrak n_{1,1}(\ide\otimes \mathfrak n_{1,1})\cdot( \ide-(213)), $

$\mathfrak n_{1,1}(\ide\otimes \mathfrak n_{0,2})=\mathfrak n_{0,2}(\mathfrak n_{1,1}\otimes \ide)+\mathfrak n_{0,2}(\ide\otimes \mathfrak n_{1,1})\cdot (213)$,

$\mathfrak n_{1,1}(\mathfrak c_2\otimes\ide)=0$;

\item relations for $\mathfrak n_{1,0}$:

$\mathfrak n_{1,0}(\mathfrak l_2)=\mathfrak n_{1,1}(\ide\otimes \mathfrak n_{1,0})\cdot ((12)-(21))$,

$\mathfrak n_{1,0}(\mathfrak c_2)=0$.

\end{itemize}
The differential is given by $d\mathfrak n_{1,1}=\mathfrak n_{0,2}(\mathfrak n_{1,0}\otimes\ide)-\mathfrak n_{0,2}(\ide\otimes\mathfrak n_{1,0})\cdot(21)$ and vanishes elsewhere.

\end{cor}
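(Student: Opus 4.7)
The corollary is essentially a translation of the preceding Proposition (which describes $\hsc^!$-algebras) into operadic generators, relations, and a differential. The plan is to assemble the three ingredients: the underlying graded operad $(q\hsc)^!$, its presentation, and the differential $d_\varphi$ induced by the non-homogeneous part of the relations in $\hsc$.

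First, I will identify the underlying graded operad. Since Proposition~\ref{P:qhsc} presents $q\hsc$ as the distributive law $\cal F(e_{1,0})\circ \hscvor$, Koszul duality for distributive laws (the dual statement given right before Corollary~\ref{C:scvordual}) gives $(q\hsc)^!=(\hscvor)^!\circ \cal F(\mathfrak n_{1,0})$, where $\mathfrak n_{1,0}$ is the Koszul dual generator of $e_{1,0}$ and therefore sits in degree $-1$ (one shift from $\Lambda$ applied to a degree $0$ generator of arity one and color $\cl\to\op$). Combining the presentation of $(\hscvor)^!$ from Corollary~\ref{C:scvordual} with the signed-dual distributive law already computed in the excerpt, one reads off precisely the generating $\mathbb S$-module $E_!=\langle \mathfrak l_2,\mathfrak c_2,\mathfrak n_{0,2},\mathfrak n_{1,1},\mathfrak n_{1,0}\rangle$ and the list of quadratic relations $R_!$ stated in the corollary; no further work is required beyond checking that the dual of the distributive law in display~(\ref{E:distributive}) reproduces the two relations
\[
\mathfrak n_{1,0}(\mathfrak l_2)=\mathfrak n_{1,1}(\ide\otimes \mathfrak n_{1,0})\cdot((12)-(21)),\qquad \mathfrak n_{1,0}(\mathfrak c_2)=0.
\]

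Next, I will compute the differential. Following Section~\ref{S:Koszul} and the Galvez-Tonks-Vallette construction recalled in the excerpt, the differential $\partial_\varphi$ on the Koszul dual cooperad $\hsc^{\ac}=(C(sE,s^2qR),\partial_\varphi)$ is the unique coderivation extending the composite
\[
(q\hsc)^{\ac}\epi s^2 qR \xrightarrow{s^{-1}\varphi} sE.
\]
The map $\varphi$ vanishes on every quadratic relation except the two quadratic-linear ones, on which it sends $e_{0,2}(e_{1,0}\otimes\ide)$ and $e_{0,2}(\ide\otimes e_{1,0})\cdot(21)$ to $e_{1,1}$. Transposing under the pairing $\cal F^{(2)}(E)\otimes \cal F^{(2)}(E^\vee)\to \kfield$ and applying the signed suspension in (\ref{E:Koszuldual}) converts $\partial_\varphi$ into a derivation $d_\varphi$ on $(q\hsc)^!$ whose only nonzero value on generators lies on $\mathfrak n_{1,1}$ (the dual of $e_{1,1}$), producing the two terms $\mathfrak n_{0,2}(\mathfrak n_{1,0}\otimes\ide)$ and $\mathfrak n_{0,2}(\ide\otimes \mathfrak n_{1,0})\cdot(21)$ with opposite signs, as predicted by the relation $d_A\rho(l,a)-\rho(d_Ll,a)-(-1)^{|l|}\rho(l,d_Aa)=\beta(l)a-(-1)^{|a|(|l|+1)}a\beta(l)$ of (\ref{E:diff}). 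Alternatively, one can simply read off the formula by demanding that the presentation encode the algebra structure described in the preceding Proposition: the only piece of structure missing from the graded presentation is precisely the homotopy-centrality relation for $\beta$, which corresponds dually to a differential on $\mathfrak n_{1,1}$.

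The main obstacle is bookkeeping the signs in the transposition/suspension step, since $\mathfrak c_2$ and $\mathfrak n_{1,0}$ carry odd degrees. These signs are fixed once and for all by the condition $d_\varphi^2=0$ and by consistency with the last equation of the preceding Proposition, so in practice one writes the ansatz $d\mathfrak n_{1,1}=\alpha\,\mathfrak n_{0,2}(\mathfrak n_{1,0}\otimes\ide)+\beta\,\mathfrak n_{0,2}(\ide\otimes\mathfrak n_{1,0})\cdot(21)$ and fixes $(\alpha,\beta)=(1,-1)$ by evaluating on an $\hsc^!$-algebra. Finally, since the remaining generators $\mathfrak l_2,\mathfrak c_2,\mathfrak n_{0,2},\mathfrak n_{1,0}$ are dual to quadratic generators involved in no quadratic-linear relation, $d_\varphi$ vanishes on them, and extending $d_\varphi$ to the whole free operad by the Leibniz rule yields a well-defined dg-operad structure on $(\hsc)^!$, which is the announced presentation.
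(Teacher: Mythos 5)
Your proposal is correct and follows essentially the same route as the paper: the paper obtains $(q\hsc)^!$ as the dual distributive law $(\hscvor)^!\circ\cal F(\mathfrak n_{1,0})$, defines $\varphi$ on the quadratic-linear relations, transports the induced coderivation $\partial_\varphi$ to a differential $d_\varphi$ on $(q\hsc)^!$, and then reads the corollary off as a direct translation of the resulting description of $\hsc^!$-algebras. Your sign-fixing via $d_\varphi^2=0$ and consistency with relation (\ref{E:diff}) matches the paper's implicit bookkeeping, so nothing further is needed.
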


\subsection{On the homology of $\hsc^!$}

In \cite{HoeLiv12}, we have considered the zero-th homology operad of $\SC$. In particular, the description of $H_0(\SC)^!$ (\cite[Proposition 6.3.2]{HoeLiv12}) is
the following
\begin{prop}The differential graded operad $H_0(\SC)^!$ has a  presentation $\cal F(E_{0},R_{0})$, where
$$E_{0}=< \mathfrak l_2, \mathfrak n_{0,2}, \mathfrak n_{1,1}, \mathfrak n_{1,0}>$$
and the vector space $R_{0}$ is the sub-$\mathbb S$-module of $\mathcal F^{(2)}(E_{0})$ generated by the relations:
\begin{itemize}
\item $R_{\Lie}$, for the $\Lie$-structure defined by $\mathfrak l_2$ and $R_\Ass$ for the associativity of $\mathfrak n_{0,2}$;
\item  relations for $\mathfrak n_{1,1}$ :

 $\mathfrak n_{1,1}(\mathfrak l_2\otimes\ide)=\mathfrak n_{1,1}(\ide\otimes \mathfrak n_{1,1})\cdot( \ide-(213)), $

$\mathfrak n_{1,1}(\ide\otimes \mathfrak n_{0,2})=\mathfrak n_{0,2}(\mathfrak n_{1,1}\otimes \ide)+\mathfrak n_{0,2}(\ide\otimes \mathfrak n_{1,1})\cdot (213)$;

\item relations for $\mathfrak n_{1,0}$:

$\mathfrak n_{1,0}(\mathfrak l_2)=\mathfrak n_{1,1}(\ide\otimes \mathfrak n_{1,0})\cdot ((12)-(21))$.
\end{itemize}
The differential is given by $d\mathfrak n_{1,1}=\mathfrak n_{0,2}(\mathfrak n_{1,0}\otimes\ide)-\mathfrak n_{0,2}(\ide\otimes\mathfrak n_{1,0})\cdot(21)$ and is zero on all the other generators.
\end{prop}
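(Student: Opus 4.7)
The plan is to mirror, for the zeroth-homology operad, the strategy carried out in this section for the full $\hsc^!$. The starting point is the description of algebras over $H_0(\SC)$: a commutative algebra $C$, an associative algebra $A$, and a central unital morphism of associative algebras $f\colon C\to A$. Since $H_0(\cal D_2)=\Com$, the generator $g_2$ (of degree $1$) disappears, and one obtains a quadratic-cubical presentation analogous to Corollary \ref{C:hsccubic} with generating $\mathbb S$-module $E'_0=\,<f_2,e_{0,2},e_{1,0}>$, relations $R_\Com$, $R_\Ass$, centrality of $e_{1,0}$, and the cubical relation $e_{1,0}(f_2)=f_2(e_{1,0}\otimes e_{1,0})$.

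Next I would pass from the quadratic-cubical to a quadratic-linear presentation by introducing the auxiliary generator $\mathfrak n_{1,1}$'s dual, namely $e_{1,1}$, with the two relations $e_{1,1}=e_{0,2}(e_{1,0}\otimes\ide)$ and $e_{1,1}=e_{0,2}(\ide\otimes e_{1,0})\cdot(21)$, plus the new quadratic relation $e_{1,1}(\ide\otimes e_{1,0})=e_{1,0}(f_2)$. This is the exact analogue of Proposition \ref{P:preshsc}, with $R_v$ replaced by the space $R_{v,0}$ of quadratic relations of $H_0(\SCvor)$ (which itself drops $g_2$ and the relation $\mathfrak n_{1,1}(\mathfrak c_2\otimes\ide)=0$ from Corollary \ref{C:scvordual}, so that $\mathfrak l_2$ generates $\Lie$ rather than $\Lambda\Ger$). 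One then checks (ql1) and (ql2) exactly as in the proof of Proposition \ref{P:preshsc}, since the argument only uses the shape of the quadratic-linear relations, not the presence of $g_2$.

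Once the quadratic-linear presentation is available, the quadratic operad $qH_0(\SC)$ is recognized, via the same distributive law as in (\ref{E:distributive}), as $\cal F(e_{1,0})\circ H_0(\SCvor)$. Dualising this distributive law, one gets
$(qH_0(\SC))^!=H_0(\SCvor)^!\circ \cal F(\mathfrak n_{1,0})$ with composition driven by
$\mathfrak n_{1,0}(\mathfrak l_2)\mapsto \mathfrak n_{1,1}(\ide\otimes \mathfrak n_{1,0})\cdot(\ide-(21))$, exactly as the specialization of the dual law written just after Theorem \ref{T:SCKoszul} (the relation $\mathfrak n_{1,0}(\mathfrak c_2)=0$ is automatically absent because $\mathfrak c_2$ no longer belongs to the generating $\mathbb S$-module). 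Assembling these pieces yields the announced presentation $\cal F(E_0,R_0)$.

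Finally, the differential comes from the Koszul duality in the quadratic-linear framework of \cite{GTV12}: the map $\varphi\colon qR\to E$ sending the two quadratic-linear relations to $e_{1,1}$ and vanishing elsewhere produces by transpose and signed desuspension the coderivation $\partial_\varphi$ on the Koszul dual cooperad, whose dual on $H_0(\SC)^!$ is the stated $d\mathfrak n_{1,1}=\mathfrak n_{0,2}(\mathfrak n_{1,0}\otimes\ide)-\mathfrak n_{0,2}(\ide\otimes\mathfrak n_{1,0})\cdot(21)$; this computation is identical to the one performed for $\hsc^!$ just above. The main obstacle I anticipate is purely bookkeeping: verifying that condition (ql2) still holds after removing $g_2$, and that the Koszulity statement for $qH_0(\SC)$ (needed implicitly to apply the quadratic-linear machinery) follows from Koszulity of $H_0(\SCvor)$ and $\cal F(e_{1,0})$ together with the distributive law, as in Theorem \ref{T:SCKoszul}. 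Everything else is a specialization of the computations already made in this section.
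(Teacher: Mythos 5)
Your proposal is correct in substance, but note that the paper does not actually prove this Proposition: it is recalled verbatim from \cite{HoeLiv12} (Proposition 6.3.2 there), so you have supplied an argument where the present paper gives only a citation. Your route --- specialize the degree-zero part of the story told for $\hsc$ in this paper: quadratic-cubical presentation of $H_0(\SC)$ as in Corollary \ref{C:hsccubic} with $g_2$ removed, pass to a quadratic-linear presentation by adjoining $e_{1,1}$ as in Proposition \ref{P:preshsc}, identify $qH_0(\SC)$ as $\cal F(e_{1,0})\circ H_0(\SCvor)$ via the distributive law (\ref{E:distributive}), dualize, and read off the differential from $\varphi$ --- is exactly the mechanism the authors use for $\hsc^!$ and is essentially how the cited result is obtained in \cite{HoeLiv12}, so the two approaches coincide in spirit. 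Two small points to tighten: first, the morphism $f\colon C\to A$ should not be called unital, since all operads here satisfy $\cal P(0,0;x)=0$ and carry no nullary operations; second, your parenthetical about dropping ``the relation $\mathfrak n_{1,1}(\mathfrak c_2\otimes\ide)=0$ from Corollary \ref{C:scvordual}'' conflates the primal relations $R_{v,0}$ of $H_0(\SCvor)$ with the dual ones --- what actually happens is that the degree-one summand $\cal F(E_{v})(\cl,\cl,\op;\op)_1$ responsible for that orthogonal relation is absent once $g_2$ is removed, so the relation simply does not arise on the dual side. The Koszulity of $H_0(\SCvor)$ needed to run the distributive-law argument does follow from the same rewriting-system computation as for $(\hscvor)^!$ with the $\mathfrak c_2$ rules deleted (or from \cite{HoeLiv12} directly), so your anticipated ``bookkeeping'' obstacles are genuinely only bookkeeping.
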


From this, it is easy to prove the following Corollary.

\begin{cor} The dg operad $\hsc^!$ is the operad composite $\Lambda\Com\circ H_0(\SC)^!$ together with the distributive law given by
\begin{equation*}
\begin{array}{ccc}
 H_0(\SC)^!\circ \Lambda\Com &\rightarrow &\Lambda\Com\circ H_0(\SC)^! \\
\mathfrak l_2(\ide\otimes\mathfrak c_2) & \mapsto & \mathfrak c_2(\mathfrak l_2\otimes\ide)+\mathfrak c_2(\ide\otimes\mathfrak l_2)\cdot (213) \\
\mathfrak n_{1,1}(\mathfrak c_2\otimes\ide) &\mapsto & 0 \\
\mathfrak n_{1,0}(\mathfrak c_2) &\mapsto & 0 
\end{array}
\end{equation*}

\end{cor}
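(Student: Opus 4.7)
The plan is to exhibit $\hsc^!$ as a composite operad via a distributive law, using the general theory of \cite[Chapter 8]{LodVal}. First I would split the generators $E_!=\{\mathfrak l_2,\mathfrak c_2,\mathfrak n_{0,2},\mathfrak n_{1,1},\mathfrak n_{1,0}\}$ as the disjoint union of the generator $\{\mathfrak c_2\}$ of $\Lambda\Com$ and the generating set $E_0=\{\mathfrak l_2,\mathfrak n_{0,2},\mathfrak n_{1,1},\mathfrak n_{1,0}\}$ of $H_0(\SC)^!$. I would then sort the defining relations $R_!$ into three classes: the $\Lambda\Com$-relations (involving $\mathfrak c_2$ alone, namely the sign-commutativity relation coming from $R_{\Lambda\Ger}$); the $H_0(\SC)^!$-relations (namely $R_\Lie$, $R_\Ass$, the two relations for $\mathfrak n_{1,1}$, and the relation $\mathfrak n_{1,0}(\mathfrak l_2)=\mathfrak n_{1,1}(\ide\otimes\mathfrak n_{1,0})\cdot((12)-(21))$); and three remaining mixed relations
\begin{align*}
\mathfrak l_2(\ide\otimes\mathfrak c_2) &= \mathfrak c_2(\mathfrak l_2\otimes\ide)+\mathfrak c_2(\ide\otimes\mathfrak l_2)\cdot(213),\\
\mathfrak n_{1,1}(\mathfrak c_2\otimes\ide) &= 0,\\
\mathfrak n_{1,0}(\mathfrak c_2) &= 0,
\end{align*}
which coincide exactly with the three rewriting rules of the proposed map $\lambda\colon H_0(\SC)^!\circ\Lambda\Com\rightarrow \Lambda\Com\circ H_0(\SC)^!$.

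Next I would check that $\lambda$ satisfies the coherence axioms for a distributive law between quadratic operads in the sense of \cite[\S8.6]{LodVal}. This reduces to verifying confluence on weight-three monomials: for each $\Lambda\Com$-relation composed with a generator $x\in E_0$, and for each $H_0(\SC)^!$-relation composed with $\mathfrak c_2$, the two rewriting paths in $\Lambda\Com\circ H_0(\SC)^!$ must yield the same element. Because two of the three rules are identically zero, the majority of critical pairs collapse automatically; only the checks mixing $\mathfrak l_2$, $\mathfrak c_2$, and the $\mathfrak n$-generators require a direct rewriting analogous to the confluence argument used in the proof of Theorem \ref{T:SCKoszul}.

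With $\lambda$ validated, \cite[Chapter 8]{LodVal} endows $\Lambda\Com\circ H_0(\SC)^!$ with an operad structure and produces a canonical morphism to $\hsc^!$ induced by inclusion of generators; this morphism is surjective by construction and is an isomorphism because the three classes of relations identified above exhaust $R_!$. The differential $d\mathfrak n_{1,1}=\mathfrak n_{0,2}(\mathfrak n_{1,0}\otimes\ide)-\mathfrak n_{0,2}(\ide\otimes\mathfrak n_{1,0})\cdot(21)$ already lives in $H_0(\SC)^!$ and annihilates $\mathfrak c_2$, so it extends compatibly to the composite, completing the identification of dg operads. The main obstacle I anticipate is the coherence verification for $\lambda$ at weight three: although the zero rules simplify most critical pairs, the signed Leibniz rewriting of $\mathfrak l_2(\ide\otimes\mathfrak c_2)$ must be checked carefully against both the Jacobi relation for $\mathfrak l_2$ and the $\mathfrak n_{1,1}$-action relations without generating extraneous constraints.
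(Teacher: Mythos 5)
Your proposal is correct and is essentially the argument the paper has in mind (the paper omits it, stating only that the corollary is ``easy to prove'' from the presentation of $H_0(\SC)^!$): one splits the generators and relations of $\hsc^!$ into the $\Lambda\Com$ part, the $H_0(\SC)^!$ part, and the three mixed relations defining the rewriting map, and then invokes the distributive-law machinery of \cite[Chapter 8]{LodVal}, the only substantive verification being the weight-three confluence check. Your additional care with the degree $-1$ of $\mathfrak c_2$ and $\mathfrak n_{1,0}$ in the signed rewriting, and the observation that the differential $d\mathfrak n_{1,1}$ lives entirely in the $H_0(\SC)^!$ factor, are exactly the points that make the identification of dg operads go through.
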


 As a consequence we get:

\begin{thm}\label{T:homology} Algebras over the homology of the operad $\hsc^!$ are triples $(H,A,\beta)$ where $H$ is a $\Lambda\Ger$-algebra, $A$
is an associative algebra and $\beta:H\rightarrow A$ is a central map of degree $-1$ satisfying $\beta(x\times y)=0$.
\end{thm}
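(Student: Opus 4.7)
The plan is to exploit the preceding Corollary, which presents $\hsc^!$ as the operadic composite $\Lambda\Com \circ H_0(\SC)^!$ with an explicit distributive law. The differential on $\hsc^!$ is concentrated on $\mathfrak n_{1,1}$, a generator of $H_0(\SC)^!$, and $d\mathfrak n_{1,1} = \mathfrak n_{0,2}(\mathfrak n_{1,0}\otimes\ide) - \mathfrak n_{0,2}(\ide\otimes \mathfrak n_{1,0})\cdot(21)$ lies entirely in $H_0(\SC)^!$; the differential is thus internal to the right-hand factor. Since $\Lambda\Com$ carries the zero differential, a K\"unneth-type argument valid in characteristic zero gives $H_*(\hsc^!) \cong \Lambda\Com \circ H_*(H_0(\SC)^!)$ as $\mathbb S$-modules. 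The distributive law of the Corollary involves only cycles on both sides (the line $\mathfrak n_{1,1}(\mathfrak c_2\otimes\ide)\mapsto 0$ loses meaning in homology together with $\mathfrak n_{1,1}$, but poses no obstruction), so it descends to a distributive law on homology and the above is an isomorphism of operads.

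The core task is therefore to compute $H_*(H_0(\SC)^!)$. Since $\mathfrak l_2,\mathfrak n_{0,2},\mathfrak n_{1,0}$ are cycles while $d\mathfrak n_{1,1}$ is the commutator that witnesses centrality of the degree $-1$ map $\beta$ corresponding to $\mathfrak n_{1,0}$, I would introduce the auxiliary operad $\cal P$ with generators $\{\mathfrak l_2,\mathfrak n_{0,2},\mathfrak n_{1,0}\}$ subject to $R_\Lie$, $R_\Ass$, and the centrality relation $\mathfrak n_{0,2}(\mathfrak n_{1,0}\otimes\ide) = \mathfrak n_{0,2}(\ide\otimes\mathfrak n_{1,0})\cdot (21)$. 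The map $\cal P\to H_*(H_0(\SC)^!)$ sending generators to their classes is well-defined: the $R_\Lie$ and $R_\Ass$ relations hold on the nose in $H_0(\SC)^!$, and centrality holds in homology since the two sides differ by $d\mathfrak n_{1,1}$. For the inverse direction I would establish a normal form on $H_0(\SC)^!$ using its implicit distributive laws (between $\Lie$ and the action-suboperad generated by $\mathfrak n_{0,2},\mathfrak n_{1,1},\mathfrak n_{1,0}$, in the spirit of \cite{HoeLiv12}). In that normal form the number of occurrences of $\mathfrak n_{1,1}$ is a filtration degree strictly decreased by $d$, so an acyclic-carrier / spectral-sequence argument shows that the subcomplex of terms containing at least one $\mathfrak n_{1,1}$ is acyclic, identifying $H_*(H_0(\SC)^!)$ with $\cal P$ arity by arity.

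Assembling the pieces, $H_*(\hsc^!)\cong \Lambda\Com \circ \cal P$ with the induced distributive law. Unpacking this presentation: the $\Lambda\Com$-$\Lie$ part of the distributive law (the Leibniz-type rule $\mathfrak l_2(\ide\otimes\mathfrak c_2)\mapsto \mathfrak c_2(\mathfrak l_2\otimes\ide)+\mathfrak c_2(\ide\otimes\mathfrak l_2)\cdot(213)$) equips $H$ with the structure of a $\Lambda\Ger$-algebra (recalling from the Preliminaries that $\Lambda\Ger=\Lambda\Com\circ\Lie$ via precisely this distributive law); the $R_\Ass$ relation on $\mathfrak n_{0,2}$ makes $A$ associative; the centrality of $\mathfrak n_{1,0}$ together with its degree $-1$ makes $\beta$ a central degree $-1$ map; and the residual rule $\mathfrak n_{1,0}(\mathfrak c_2)\mapsto 0$ from the Corollary yields the condition $\beta(x\times y)=0$. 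This matches the description in the theorem.

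The principal obstacle is the normal-form / acyclicity step in the computation of $H_*(H_0(\SC)^!)$. Low-arity cases (e.g.\ $(\cl,\op;\op)$ and $(\cl,\cl;\op)$) are handled by direct bookkeeping with the differential $d\mathfrak n_{1,1}$, but a uniform argument requires carefully controlling composites containing several copies of $\mathfrak n_{1,1}$ so as to rule out any ``exotic'' cycles beyond those visible in $\cal P$; the filtration by number of $\mathfrak n_{1,1}$'s, together with the distributive-law normal form, is the precise technical tool I would invoke to push this through.
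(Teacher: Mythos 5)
Your overall skeleton is exactly the paper's: present $\hsc^!$ as $\Lambda\Com\circ H_0(\SC)^!$ via the distributive law of the preceding Corollary, apply the K\"unneth formula for the plethysm product (as in \cite[Lemma 2.1.3]{Fr04}) to get $H_*(\hsc^!)=\Lambda\Com\circ H_*(H_0(\SC)^!)$ with the induced distributive law, and then read off the algebras. The one place where you diverge is the computation of $H_*(H_0(\SC)^!)$: the paper does not prove this at all, it simply quotes \cite[Theorem 7.2.5]{HoeLiv12}, which states that algebras over that homology are triples $(L,A,f)$ with $L$ a Lie algebra, $A$ associative and $f$ central of degree $-1$ --- i.e.\ precisely your auxiliary operad $\cal P$.

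Your re-derivation of that input is, however, not yet a proof, and the specific mechanism you propose does not work as stated. Since $d\mathfrak n_{1,1}$ contains no $\mathfrak n_{1,1}$, the differential \emph{strictly lowers} the number of occurrences of $\mathfrak n_{1,1}$; consequently the span of monomials containing at least one $\mathfrak n_{1,1}$ is not a subcomplex (its boundary lands in the $\mathfrak n_{1,1}$-free part), and the associated graded of the filtration by the number of $\mathfrak n_{1,1}$'s carries the \emph{zero} differential, so the filtration by itself produces no acyclicity statement --- all the content sits in the page-one differential of that spectral sequence. One genuinely needs either an explicit contracting homotopy on the normal-form basis, or a Koszul-type argument identifying the relevant subquotients with known acyclic complexes (this is what is done in \cite{HoeLiv12}). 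So either carry that computation out in full or, as the paper does, cite it; everything else in your argument (the descent of the distributive law to homology, the disappearance of the line $\mathfrak n_{1,1}(\mathfrak c_2\otimes\ide)\mapsto 0$, and the identification of the resulting algebras with the triples $(H,A,\beta)$ of the statement) is correct.
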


\begin{proof}Recall from \cite[Theorem 7.2.5]{HoeLiv12} that algebras over the  operad $H_0(\SC)^!$ are triples $(L,A,f)$ where $L$ is a
Lie algebra, $A$
is an associative algebra and $f:L\rightarrow A$ is a central map of degree $-1$.
Using the K\"unneth formula for the plethysm product $\circ$ of $\mathbb S$-modules, as in \cite[Lemma 2.1.3]{Fr04}, we obtain that $H_*(\hsc^!)=
\Lambda\Com\circ H_*(H_0(\SC)^!)$ with the distributive law given by
\begin{equation*}
\begin{array}{ccc}
H_*( H_0(\SC)^!)\circ \Lambda\Com &\rightarrow &\Lambda\Com\circ H_*(H_0(\SC)^!) \\
{[}\mathfrak l_2](\ide\otimes\mathfrak c_2) & \mapsto & \mathfrak c_2([\mathfrak l_2]\otimes\ide)+\mathfrak c_2(\ide\otimes[\mathfrak l_2])\cdot (213) \\
{[}\mathfrak n_{1,0}](\mathfrak c_2) &\mapsto & 0 
\end{array}
\end{equation*}
where $[x]$ denotes the image of a cycle $x$ in $H_*(H_0(\SC)^!)$.
\end{proof}

%%
%%
%%-----------***************----------------------------------SPECTRAL SEQUENCE-------------------------------********************---------------
%%
%%

\section{On the spectral sequence}

In this section we will show that the spectral sequence $E(\SC)$ associated to the stratification of the compactification of points in the upper half plane collapses at the second stage. We prove that, as an $\mathbb S$-module, $E^2(\SC)$  corresponds to the $\mathbb S$-module defined by $\hsc$, but prove that the operad structures are different.

\subsection{On the first sheet of the spectral sequence}
For this section, we refer to \cite{Voronov99}, \cite{Hoefel09} and \cite{Dolgushev11}.

For the compactification of points we are considering two different spaces: the space $C(n)$ of configurations of $n \geqslant 2$ points in the disk modded out by the action of the group of dilatations and translations, of dimension $2n-3$;
the space $C(n,m)$, with $2n+m \geqslant 2$, of configurations of $n$ points in the upper half plane, $m$ points on the line, modded out by the action of the group of dilatations
and translations along the line, of dimension $2n+m-2$.

The operad $\SC$ is homotopy equivalent to the Fulton Mac Pherson compactification of $C(n)$ for $\SC(n,0;\cl)$ and of $C(n,m)$ for $\SC(n,m;\op)$. Since $C(1)$ and $C(0,1)$ are not well defined, we introduce both $\SC(1,0;\cl)$ and $\SC(0,1;\op)$ as the one point spaces containing the identity element of the closed and open colors, respectively. It has been proven by Getzler and Jones in \cite{GetJon94}, that the filtration associated to the stratification of the compactification of $C(n)$ induces a spectral sequence $E(\cal D_2)$, whose first sheet coincides with the cobar construction of the cooperad $(\Ger)^{\ac}$. Furthermore, the spectral sequence collapses at the second stage,  and $E^2(\cal D_2)$ coincides, as an operad, with $\Ger$. We will then focus on the open part of the Swiss-cheese operad.
From \cite[Theorem 5.2]{Hoefel09}, there is a stratification of $\overline{C(n,m)}$ indexed by partially planar trees, which induces a topological filtration

$$F^p:=F^p(\overline{C})=\{\text{closure of the union of strata of dimension}\ p\}.$$
It yields a spectral sequence, whose first sheet is given by
$$E^1(\SC)_{p,q}=H_{p+q}(F^p,F^{p-1})=H^{-q}(F^p\setminus F^{p-1}).$$

Let $\mathbb T(n,m)_p$ be the set of partially planar trees with $n$ closed inputs, $m$ open inputs, the output being open and $v:=2n+m-p-1$  vertices. To any vertex $v_i$ of  a tree $T\in\mathbb T(n,m)_p$ is associated the triple $(n_i,m_i,x_i)$ corresponding respectively, to its closed, open inputs, and its output. One has the relation
\begin{equation}\label{E:vertexformula}
\begin{aligned}
\sum_{i=1}^v n_i=&n+\sum_{i=1}^v \delta_{x_i,\cl}, \\
\sum_{i=1}^v m_i=&m+\sum_{i=1}^v \delta_{x_i,\op}-1. 
\end{aligned}
\end{equation}
Since any tree $T\in \mathbb T(n,m)_p$ is responsible for a strata $C_T:=\prod\limits_{i=1}^v \SC(n_i,m_i;x_i)$
one gets that
\begin{equation}\label{E:E1}
E^1(\SC)(n,m)_{p,q}=\bigoplus_{\substack{T\in\mathbb T(n,m)_p,\\ u_1+\ldots+u_v=-q}} \bigotimes_{i=1}^v H^{u_i}(\SC(n_i,m_i;x_i)).
\end{equation}

Because we are not exactly using the notation of \cite{Dolgushev11}, we need the following Lemma.

\begin{lem}\label{lemma:cobar}
The operad $E^1(\SC)$ coincides with the cobar construction of the cooperad $(\Lambda_\cl\Lambda\hsc)^*$. More precisely, one has
$$E^1(\SC)(n,m)_{p,q}=\Omega((\Lambda_\cl\Lambda\hsc)^*)(n,m;\op)^{(2n+m-p-1)}_{p+q},$$
where the upper index corresponds to the weight grading by the number of vertices of the trees involved and the lower index corresponds to the total degree.
\end{lem}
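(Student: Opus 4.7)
The strategy is to first match the two sides as graded $\mathbb S$-modules via Poincaré--Lefschetz duality, then verify that the differential and the operadic composition agree. The starting point is (\ref{E:E1}): each summand is indexed by a partially planar tree $T\in\mathbb T(n,m)_p$ and is a tensor product over the vertices of $T$.

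For each vertex, $\mathring C(n_i,m_i;x_i)$ is an orientable open manifold of dimension $d_i=2n_i+m_i-\delta_{x_i,\cl}-2$, and the top open stratum $\mathring C_T$ is the product over vertices. I would apply the Künneth formula together with Poincaré--Lefschetz duality on each factor to rewrite
\[
\bigotimes_i H^{u_i}\bigl(\SC(n_i,m_i;x_i)\bigr)\;\cong\;\bigotimes_i s^{d_i}\bigl(\hsc(n_i,m_i;x_i)\bigr)^{\!*},
\]
where the natural orientation of $C(n,m;x)$ is preserved by permutations of closed inputs (each exchange of two complex coordinates is even) and changes by the sign of the permutation of open inputs. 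As an $(S_n\times S_m)$-representation this orientation twist is the restriction of $\sgn_n\otimes\sgn_{n+m}$, which is exactly the twist built into $\Lambda_\cl\Lambda$. Therefore each vertex contributes $s^{-1}(\Lambda_\cl\Lambda\hsc)^*(n_i,m_i;x_i)$, i.e.\ the generating space of the cobar construction.

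A direct bookkeeping using (\ref{E:vertexformula}) gives $\sum_i d_i=p$, so the total internal degree $\sum_i(d_i-u_i)$ equals $p+q$, while $v=2n+m-p-1$ is the number of vertices and matches the cobar weight. Summing over $T\in\mathbb T(n,m)_p$ reconstructs $\cal F^{(v)}(s^{-1}\overline{\mathcal C})(n,m;\op)$ with $\mathcal C=(\Lambda_\cl\Lambda\hsc)^*$, which is the weight-$v$ piece of $\Omega\mathcal C$ in the claimed degree. Next, $d^1$ is the connecting homomorphism of the triple $(F^p,F^{p-1},F^{p-2})$; geometrically it records how the facets of $\mathring C_T$ limit onto the adjacent stratum $\mathring C_{T'}$ obtained by contracting one internal edge of $T$. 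Dualising via Poincaré duality turns this into the sum over internal edges of $T'$ of the vertex-splittings given by the partial decomposition maps of $(\Lambda_\cl\Lambda\hsc)^*$, which is exactly the cobar differential. The topological insertion on $\overline{C(n,m)}$ corresponds at the level of strata to grafting of partially planar trees, so the two operad compositions agree.

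The main obstacle is the sign/orientation bookkeeping: checking that the orientations of the products $\mathring C_T$ combine to produce exactly the twist $\sgn_n\otimes\sgn_{n+m}$ dictated by $\Lambda_\cl\Lambda$, and that the geometric signs appearing in the connecting homomorphism of the filtration match the algebraic signs in the cobar differential obtained by dualising the decomposition maps of $\hsc^*$. Once these signs are pinned down, the remaining combinatorics of partially planar trees from \cite{Hoefel09} matches the free 2-coloured cobar construction term by term.
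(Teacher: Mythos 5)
Your proposal follows essentially the same route as the paper: both sides are free $2$-coloured operads on the same partially planar trees, each vertex contribution is identified with $s^{-1}(\Lambda_\cl\Lambda\hsc)^*$ by dualisation and degree shift (your computation $\sum_i d_i=p$ is equivalent to the paper's direct total-degree count from (\ref{E:vertexformula})), and the matching of the differentials is ultimately deferred --- the paper simply cites \cite{Hoefel09} and \cite{Dolgushev11} at the point where you sketch the connecting-homomorphism argument and flag the orientation signs as the remaining work. One small slip in your geometric description: a facet of $\overline{C_T}$ is a stratum $C_{T'}$ with $T'$ obtained by \emph{expanding} a vertex of $T$ (equivalently, $T$ is a contraction of $T'$), not by contracting an internal edge of $T$; this is in fact what makes $d^1$ increase the number of vertices and hence agree with the cobar differential, as you correctly conclude.
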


\begin{proof}
Recall that for any cooperad $\cal C$, $\Omega(\cal C)$ is the free operad $\cal F(s^{-1}\overline{\cal C})$, where $\overline{\cal C}$ is the coaugmentation ideal of the cooperad. Since $E^1(\SC)$ is also a free 
$2$-colored operad, they have the same description in terms of trees. One has,
\begin{multline*}
H^u(\SC(n,m;x))=\hsc^*(n,m;x)_{-u}=(\Lambda^{-1}\hsc^*)(n,m;x)_{-u+n+m-1}\\
=(\Lambda_{\cl}^{-1}\Lambda^{-1}\hsc^*)(n,m;x)_{-u+2n+m-1-\delta_{x,\cl}} 
=(s^{-1}\Lambda_{\cl}^{-1}\Lambda^{-1}\hsc^*)(n,m;x)_{-u+2n+m-1-\delta_{x,\cl}-1}
\end{multline*}
Using the description of $E^1(\SC)$ in (\ref{E:E1}) and the formulas in (\ref{E:vertexformula}), one gets
$$\sum_{i=1}^v (-u_i+2n_i+m_i-1-\delta_{x_i,\cl}-1)=q+2n+m+\sum_{i=1}^v \delta_{x_i,\cl}+\delta_{x_i,\op}-2v-1=q+2n+m-v-1=q+p,$$
which explains the grading obtained.
From \cite{Hoefel09} and \cite{Dolgushev11}, we know that the differentials of the two operads coincide. As a consequence, the two differential graded operads coincide. \end{proof}

\subsection{On the second sheet of the spectral sequence}

Theorem \ref{T:SCKoszul} asserts that $\hsc$ is a Koszul operad, which expresses that
$$\Omega(\hsc^{\ac})\rightarrow \hsc$$ is a quasi-isomorphism of operads.
Since all the graded vector spaces involved are finite dimensional, there is a quasi-isomorphism of cooperads
$$\hsc^*\rightarrow (\Omega(\hsc^{\ac}))^*=B((\hsc^{\ac})^*)\stackrel{(\ref{E:Koszuldual})}{=}B(\Lambda(\hsc^!)).$$
Applying the bar-cobar adjuntion we have a sequence of quasi-isomorphisms: 
$$\Omega(\hsc^*)\rightarrow \Omega B(\Lambda(\hsc^!)) \to \Lambda(\hsc^!).$$
Now applying the functor $\Lambda_{\cl}^{-1}\Lambda^{-1}$ to the above morphism and using Lemma \ref{lemma:cobar}, we finally have the quasi-isomorphism:
\begin{equation}\label{E:E1vshsc}
E^1(\SC)=\Omega((\Lambda_\cl\Lambda\hsc)^*)\rightarrow \Lambda_\cl^{-1}(\hsc^!).
\end{equation}

\begin{prop}\label{P:final}
The operad $E^2(\SC)$ is the quadratic operad $\cal F(E',qR')$, where
\[ 
E'=<f_2,g_2,e_{0,2},e_{1,0}>
\] 
and the space of relations $qR'$ is the sub-$\mathbb S$-module of $\mathcal F^{(2)}(E')$
defined by the relations
\begin{itemize}
\item $R_\Ger$ for the Gerstenhaber structure induced by $f_2$ and $g_2$ and $R_\Ass$ for the associativity of $e_{0,2}$;
\item  centrality of $e_{1,0}$: $e_{0,2}(e_{1,0}\otimes\ide)=e_{0,2}(\ide\otimes e_{1,0})\cdot (21)$;
\item the quadratic relation:  $e_{1,0}(f_2)=0.$
\end{itemize}

Equivalently, algebras over the  operad $E^2(\SC)$
are triples $(G,A,f)$ where $G$ is a Gerstenhaber algebra, $A$ is an associative algebra, $f:G\rightarrow A$ is a central degree $0$ map satisfying
$f(gg')=0, \forall g,g'\in G$.

\end{prop}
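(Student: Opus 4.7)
The plan is as follows. By the quasi-isomorphism (\ref{E:E1vshsc}), which rests on the Koszulity of $\hsc$ established in Theorem \ref{T:SCKoszul}, there is a quasi-isomorphism of dg operads $E^1(\SC) \to \Lambda_\cl^{-1}(\hsc^!)$. Taking $H_*$ and noting that the suspension operation $\Lambda_\cl^{-1}$ is a mere degree shift of the closed color tensored with a sign representation and therefore commutes with homology, this yields an isomorphism of operads $E^2(\SC) \cong \Lambda_\cl^{-1} H_*(\hsc^!)$. It therefore suffices to identify the operad $\Lambda_\cl^{-1} H_*(\hsc^!)$ with the quadratic operad $\cal F(E', qR')$ of the statement.

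By Theorem \ref{T:homology}, an $H_*(\hsc^!)$-algebra is a triple $(H,A,\beta)$ where $H$ is a $\Lambda\Ger$-algebra, $A$ is associative, and $\beta:H\to A$ is central of degree $-1$ satisfying $\beta(x\times y)=0$. To translate this into a description of $\Lambda_\cl^{-1} H_*(\hsc^!)$-algebras, I use the general principle that a $\Lambda_\cl^{-1}\cal P$-algebra on $(V_\cl,V_\op)$ is the same data as a $\cal P$-algebra on $(sV_\cl,V_\op)$. Applied here, a $\Lambda_\cl^{-1} H_*(\hsc^!)$-algebra on $(G,A)$ is an $H_*(\hsc^!)$-algebra on $(sG,A)$. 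Using the classical identification between $\Lambda\Ger$-algebra structures on $sG$ and $\Ger$-algebra structures on $G$, under which $sg_1\times sg_2$ corresponds to $s(g_1 g_2)$, and setting $f(g):=\beta(sg)$ (a map of degree $0$ since $\beta$ has degree $-1$), the centrality of $\beta$ translates into the centrality of $f$, and the relation $\beta(x\times y)=0$ becomes $f(g_1 g_2)=0$.

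Thus, algebras over $\Lambda_\cl^{-1}H_*(\hsc^!)$ are exactly triples $(G,A,f)$ with $G$ a Gerstenhaber algebra, $A$ associative, and $f:G\to A$ a central map of degree $0$ satisfying $f(g_1 g_2)=0$. By inspection, the quadratic operad $\cal F(E',qR')$ described in the statement governs precisely these triples: its presentation is the quadratic part of the presentation of $\hsc$ given in Corollary \ref{C:hsccubic}, obtained by replacing the quadratic-cubic relation $e_{1,0}(f_2)=f_2(e_{1,0}\otimes e_{1,0})$ by the quadratic relation $e_{1,0}(f_2)=0$. The main delicate point is the sign bookkeeping involved in the suspension isomorphism, which must be carried out carefully so as to produce exactly the relation $f(g_1 g_2)=0$ rather than a sign or suspension variant; once this is verified, both descriptions of the operad coincide.
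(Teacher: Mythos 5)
Your proposal is correct and follows essentially the same route as the paper: the paper's proof likewise combines the quasi-isomorphism (\ref{E:E1vshsc}) with the computation of $H_*(\hsc^!)$ in Theorem \ref{T:homology}, and you have merely made explicit the (routine) desuspension step $\Lambda_\cl^{-1}$ translating $(\Lambda\Ger,\beta)$ on $(sG,A)$ into $(\Ger,f)$ on $(G,A)$, which the paper leaves implicit.
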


Note that the operad obtained is exactly the quadratic operad associated to the quadratic-cubical presentation of the operad $\hsc$ of Corollary \ref{C:hsccubic}.

\begin{proof}
 The operad $E^2(\SC)$ is the homology of the dg operad $E^1(\SC)$. Thanks to the quasi-isomorphism (\ref{E:E1vshsc}), it is the homology of the operad $\Lambda^{-1}_\cl(\hsc^!)$. From the computation of the homology of $\hsc^!$ obtained in Theorem \ref{T:homology}, we get the result.
\end{proof}

\begin{thm}\label{T:ss} The spectral sequence $E(\SC)$ collapses at the second stage.
\end{thm}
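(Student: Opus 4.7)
The plan is to deduce the collapse from a dimension count, piece by piece, using convergence of the spectral sequence. Since the filtration on $\overline{C(n,m)}$ by boundary strata has length bounded by $2n+m-2$, it is finite, so the spectral sequence converges to $H_*(\SC)=\hsc$. Hence $E^\infty(\SC)(n,m;x)$ is the associated graded of an induced filtration on $\hsc(n,m;x)$, and in particular
\[
\dim E^\infty(\SC)(n,m;x) \;=\; \dim \hsc(n,m;x).
\]
Each page $E^{r+1}$ is a subquotient of $E^r$ under $d_r$, whence $\dim E^\infty \le \dim E^2$, with equality if and only if $d_r=0$ for all $r\ge 2$. Thus it suffices to establish the identity
\[
\dim E^2(\SC)(n,m;x) \;=\; \dim \hsc(n,m;x) \qquad \text{for every } (n,m;x).
\]

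Next I would use the quasi-isomorphism (\ref{E:E1vshsc}), together with the identification made in Proposition \ref{P:final}, to identify $E^2(\SC)$ with $\Lambda_\cl^{-1} H(\hsc^!)$ as an $\mathbb{S}$-module (the operad structures differ, as emphasised in that proposition). The decomposition $H(\hsc^!)=\Lambda\Com\circ H(H_0(\SC)^!)$ appearing in the corollary preceding Theorem \ref{T:homology} then reduces the dimension count for $H(\hsc^!)$ to that of $H(H_0(\SC)^!)$, which is computed in \cite{HoeLiv12}. On the other hand, the quadratic-linear Koszulity of $\hsc$ (Theorem \ref{T:SCKoszul}), together with property (ql2), implies $\dim\hsc=\dim q\hsc$, and Proposition \ref{P:qhsc} realises $q\hsc$ as the distributive composite $\cal F(e_{1,0})\circ\hscvor$. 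Both sides of the desired dimension identity thus split as plethysms of two factors whose dimensions are known.

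The main obstacle is the careful bookkeeping of the shifts $\Lambda$ and $\Lambda_\cl$: I would need to verify that applying $\Lambda_\cl^{-1}$ to the outer factor $\Lambda\Com$ yields dimensions matching those of $\cal F(e_{1,0})$, and that the inner factor $\dim H(H_0(\SC)^!)$ matches $\dim\hscvor$ piece by piece, so that the two plethystic decompositions agree in total dimension. Once this numerical matching is carried out, the inequality $\dim E^\infty \le \dim E^2$ becomes an equality for every $(n,m;x)$, which forces all higher differentials to vanish and yields the collapse at $E^2$.
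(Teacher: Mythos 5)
Your proposal is correct and follows essentially the same route as the paper: convergence of the finite boundary-strata filtration plus the equality $\dim E^2(\SC)(n,m;\op)=\dim\hsc(n,m;\op)$ forces all higher differentials to vanish. The paper obtains the dimension equality more directly by reading off from Proposition \ref{P:final} that $E^2(\SC)(n,m;\op)=\Ger(n)\otimes\Ass(m)=H_*(\SC)(n,m;\op)$ as $\mathbb S$-modules, whereas you re-derive it through the two plethystic decompositions; that bookkeeping is exactly what Proposition \ref{P:final} already encapsulates, so no new content is needed.
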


\begin{proof} Proposition \ref{P:final} implies that, as an $\mathbb S$-module,  $E^2(\SC)(n,m;\op)=\Ger(n)\otimes\Ass(m)=H_*(\SC)(n,m;\op).$ Because the spectral sequence converges to the homology of $\SC$, and because the dimension of the second sheet is the dimension of the target, one gets that $E(\SC)$ collapses at the second stage.
\end{proof}

\def\cprime{$'$} \def\cprime{$'$} \def\cprime{$'$} \def\cprime{$'$}
\providecommand{\bysame}{\leavevmode\hbox to3em{\hrulefill}\thinspace}
\providecommand{\MR}{\relax\ifhmode\unskip\space\fi MR }
% \MRhref is called by the amsart/book/proc definition of \MR.
\providecommand{\MRhref}[2]{%
  \href{http://www.ams.org/mathscinet-getitem?mr=#1}{#2}
}
\providecommand{\href}[2]{#2}

\end{document}